\renewcommand{\arraystretch}{1.5}
\newtheorem{thm}{Theorem}
\newtheorem{remark}{Remark}
\newtheorem{corollary}{Corollary}
\newtheorem{example}{Example}
\newtheorem{id}{Identity}
\begin{document}

\begin{frontmatter}



\title{The $2p$ order Heisenberg-Pauli-Weyl uncertainty principles related to the offset linear canonical transform} 

\author[a,b]{Jia-Yin Peng} 
\author[a,b]{Bing-Zhao Li\corref{mycorrespondingauthor}}
\cortext[mycorrespondingauthor]{Corresponding author}\ead{li\_bingzhao@bit.edu.cn}

\affiliation[a]{organization={School of Mathematics and Statistics, Beijing Institute of Technology},
	city={Beijing},
	postcode={100081}, 
	country={China}}

\affiliation[b]{organization={Beijing Key Laboratory on MCAACI, Beijing Institute of Technology},
	city={Beijing},
	postcode={100081}, 
	country={China}}

\begin{abstract}
The uncertainty principle is one of the fundamental tools for time-frequency analysis in signal processing, revealing the intrinsic trade-off between time and frequency resolutions. With the continuous development of various advanced time-frequency analysis methods based on the Fourier transform, investigating uncertainty principles associated with these methods has become one of the most interesting topics. This paper studies the uncertainty principles related to the offset linear canonical transform, including the Plancherel-Parseval-Rayleigh identity, the $2p$ order Heisenberg-Pauli-Weyl uncertainty principle and the sharpened Heisenberg-Weyl uncertainty principle. Numerical simulations are also proposed to validate the derived results.
\end{abstract}



\begin{keyword}
Uncertainty principle\sep Heisenberg-Pauli-Weyl uncertainty principle\sep Offset linear canonical transform
\end{keyword}

\end{frontmatter}

\section{Introduction}
\label{Intro}
Heisenberg inequality \cite{heisenberg,weyl,kennard} was first proposed by the German physicist Heisenberg in 1927, which indicates that in a quantum system, the position and momentum of a particle cannot be precisely measured simultaneously. In 1928, H. Weyl and W. Pauli demonstrated that the energy of a signal and that of its Fourier transform (FT) also satisfy the Heisenberg inequality. The FT \cite{FT,FT2,FT3} is one of the most significant analytical tools in signal processing, enabling the transformation of signals from the time domain to the frequency domain. Consequently, the Heisenberg inequality reveals an inherent limitation in the simultaneous concentration of energy in both the time and frequency domains \cite{HPW}, which is commonly recognized as the Heisenberg uncertainty principle in signal processing. This principle states that the time duration $\Delta_{f(t)}$ of a signal $f(t)$ and the frequency bandwidth $\Delta_{\hat{f}(\xi)}$ of its FT $\hat{f}(\xi)$ is no less than $1/2$ \cite{UN}.

This uncertainty principle conveys two valuable insights \cite{xu}. First, the resolutions in the time and frequency domains cannot be simultaneously increased without bound, as their product is constrained by a nonzero lower limit. Second, there exists a trade-off relationship between time and frequency resolution, meaning that improving frequency resolution inevitably leads to a reduction in time resolution, and vice versa. Time resolution and frequency resolution are key parameters in signal detection. High time resolution facilitates precise localization, while high frequency resolution enables accurate estimation of spectral components, such as velocity or modulation characteristics. Therefore, the uncertainty principle has become one of the important topics in signal processing \cite{UP1, UP2, UP3}. 

Building on the above research, John Michael Rassias proposed the generalized Heisenberg-Pauli-Weyl (HPW) uncertainty principle in 2004 \cite{HPW}.

In the following year, he presented the Heisenberg-Weyl (HW)  uncertainty principle \cite{HW}. To obtain a more precise bound for practical applications, Rassias proposed the sharpened Heisenberg-Weyl (SHW) uncertainty principle in the Fourier analysis in 2006 \cite{SHW}.

The above uncertainty principles are all established based on the FT \cite{FT,FT2,FT3}, which can convert time domain signals into the frequency domain to facilitate the analysis of their frequency characteristics. However, for non-stationary signals, the FT falls short in providing optimal time-frequency resolution, as it cannot effectively capture the temporal variations of the spectrum. To address this limitation, a range of advanced time-frequency analysis tools such as the fractional Fourier transform (FrFT) \cite{FrFt, FrFt2, FrFt3}, the linear canonical transform (LCT) \cite{LCT, LCT2} and the offset linear canonical transform (OLCT) have been developed. 

The OLCT is the generalization of the FT, FrFT and LCT \cite{OLCT, OLCTtwo}. It has six parameters $(a, b, c, d, \tau, \eta)$, which makes it more flexible. 

In recent years, various uncertainty principles based on the FrFT \cite{FrFtUN, FrFtUN2, FrFtDP}, LCT \cite{LCTUN, DB, UPDP} and OLCT \cite{OLCTUN1, OLCTUN2, OLCTUN3} have been extensively investigated. However, the $2p$ order HPW uncertainty principles in the OLCT domain remain unexplored, highlighting the need for further research in this area. The OLCT has demonstrated considerable theoretical and practical potential in applications such as time-frequency analysis, filter design, and target detection \cite{OLCTthree}. Therefore, studying the uncertainty principle associated with the OLCT is of both theoretical and practical importance.

In this paper, three types of the Heisenberg uncertainty principles related to the OLCT are proposed. Section \ref{sec2} is the preliminaries of this work. In section \ref{sec3}, we extend the Plancherel-Parseval-Rayleigh (PPR) identity,  the $2p$ order HPW uncertainty principle, HW uncertainty principle and the SHW uncertainty principle in OLCT domain. Section \ref{sec4} gives experimental validations of the proposed theoretical results. Section \ref{sec5} concludes this paper.

\section{Preliminaries}
\label{sec2}
In this section, we briefly introduce the definition of the OLCT and the fundamental concepts required for deriving the main results. Additionally, we review the $2p$ order Heisenberg uncertainty principle based on the FT.
\subsection{The definition of the OLCT} 
The OLCT of a signal $f(t)$ with parameter
$\renewcommand{\arraystretch}{0.8}
J = \left[\begin{array}{cc|c}
a&b&\tau\\
c&d&\eta\\
\end{array}\right]$, is defined as follows   \cite{OLCT}

\begin{equation}
\label{1}
\begin{aligned}
O_f^J(\xi)=O_J[f(t)](\xi)
=\begin{cases}
\int_{-\infty}^{+\infty} f(t) K_J(t,\xi) \mathrm{d}t,b \neq 0\\
\sqrt{d} \mathrm{e}^{\mathrm{j} [\frac{cd(\xi-\tau)^2}{2}+\xi \eta]} f(d\xi-d\tau),b=0
\end{cases},
\end{aligned}
\end{equation}
where $K_J(t,\xi)=\frac{1}{\sqrt{\mathrm{j} 2\pi b}} \mathrm{e}^{\mathrm{j} [\frac{a}{2b} t^2-\frac{1}{b}t(\xi-\tau)-\frac{1}{b}\xi (d\tau -b \eta)+\frac{d}{2b}(\xi^2+\tau^2)]}$
and parameters $a$, $b$, $c$, $d$, $\tau$, $\eta \in \mathbb{R}$, $ad-bc=1$.
When the parameter $\renewcommand{\arraystretch}{0.8}
J = \left[\begin{array}{cc|c}
a&b&0\\
c&d&0\\
\end{array}\right]$, the OLCT becomes LCT. When the parameter $\renewcommand{\arraystretch}{0.8}
J = \left[\begin{array}{cc|c}
cos(\alpha)&sin(\alpha)&0\\
-sin(\alpha)&cos(\alpha)&0\\
\end{array}\right]$, the OLCT becomes FrFt. When the parameter $\renewcommand{\arraystretch}{0.8}
J = \left[\begin{array}{cc|c}
0&1&0\\
-1&0&0\\
\end{array}\right]$, the OLCT becomes FT. 
\subsection{Some identities} 
This section presents several fundamental equations used throughout this paper. We begin by introducing a differential identity \cite{HPW}.

\begin{id}
	\label{id1}
	Suppose that $f(t)$ is a signal with $t \in \mathbb{R}$, then  for any fixed but arbitrary integer $k \in \mathbb{N}^+ = \{1,2,...\}$, the following identity holds
	\begin{equation}
	\label{6}
	f(t) \overline{f^{(k)}}(t)+f^{(k)}(t)\overline{f(t)} = \sum_{l=0}^{{[\frac{k}{2}]}} (-1)^l \frac{k}{k-l} \binom{k-l}{l} \frac{\mathrm{d}^{k-2l}}{\mathrm{d}t^{k-2l}} \lvert f^{(l)}(t) \rvert^2,
	\end{equation}
	where $f^{(l)} (t)= \frac{\mathrm{d}^l}{\mathrm{d}t^l}f(t)$ is the $l$-th derivative of $f(t)$, $l \in \mathbb{N} = {0,1,2,...}$, with $0 \leq l \leq [ \frac{k}{2} ]$, and $ [ \ ]$ denotes the floor function (i.e., the greatest integer less than or equal to its argument), $\overline{(\cdot)}$ is the conjugate of $(\cdot)$.
\end{id}

Next, we introduce a Lagrange type differential identity \cite{HPW}.
\begin{id}
	Suppose that $f_{\alpha}(t) = \mathrm{e}^{-\mathrm{j} \alpha t}f(t)$, with $\alpha = 2 \pi \xi_m$, for any fixed but arbitrary real constant $\xi_m$, the following identity holds
	\begin{equation}
	\label{9}
	\lvert f_{\alpha}^{(q)}(t) \rvert^2 = \sum_{n=0}^q B_{qn} \lvert f^{(n)}(t) \rvert^2 +2 \sum_{0 \leq i \leq z \leq q} C_{qiz} \mathrm{Re}((-1)^{q-\frac{i+z}{2}}  f^{(i)}(t) \overline{f^{(z)}(t)}).
	\end{equation}
	where 
	\begin{equation}
	\label{90}
	B_{qn}= \binom{q}{n}^2 \alpha^{2(q-n)}, C_{qiz} = s_{qi} \binom{q}{i} \binom{q}{z} \alpha^{2q-z-i}, s_{qi} \in \{\pm1\},
	\end{equation}
	and $\mathrm{Re} \overline{(\cdot)}$ is the real part of $\overline{(\cdot)}$.
\end{id}
Subsequently, we introduce an integral identity  \cite{HPW}.
\begin{id}
	Suppose that $h(t)$, $\omega(t)$ and $\omega_p(t)$ are real-valued functions and all the following integrals exist. For any fixed but arbitrary $p \in \mathbb{N}$, $i =p-2q \in \mathbb{N}^+$ and $r \in \{0, 1, 2, . . ., i-1 \}$, the following identity holds
	\begin{equation}
	\label{11}
	\int \omega_p(t) h^{(i)} (t) \mathrm{d}t = \sum_{r=0}^{i-1} (-1)^r \omega_p^{(r)}(t) h^{(i-r-1)}(t)+(-1)^i \int \omega_p^{(i)}(t) h(t) \mathrm{d}t,
	\end{equation}
	\begin{equation}
	\label{12}
	\int \omega_p(t) h^{(i)} (t) \mathrm{d}t = (-1)^i \int_{\mathbb{R}} \omega_p^{(i)}(t) h(t) \mathrm{d}t,
	\end{equation}
	where 
	\begin{equation}
	\label{91}
	\omega_p(t) = (t-t_m)^p \omega(t), \  \mathrm{for \ any \ fixed \ but \ arbitrary \ constant} \ t_m \in \mathbb{R}.
	\end{equation}
	Equation \eqref{12} holds if the condition
	\begin{equation}
	\label{13}
	\sum_{r=0}^{v-1} (-1)^r \lim_{\lvert t \rvert \rightarrow \infty} \omega_p^{(r)} (t) h^{(i-r-1)}(t) = 0,
	\end{equation}
	holds.
\end{id}
We define the FT $\hat{f}(\xi)$ of the signal $f(t)$ as follows
\begin{equation}
\label{3}
\hat{f}(\xi) = \int_{\mathbb{R}} \mathrm{e}^{-2 \mathrm{j} \pi \xi t} f(t) \mathrm{d}t,
\end{equation}  
and
\begin{equation}
\label{4}
f(t) = \int_{\mathbb{R}} \mathrm{e}^{2 \mathrm{j} \pi \xi t} \hat{f}(\xi) \mathrm{d}\xi,
\end{equation}
then we introduce the PPR identity in the FT domain \cite{HPW}.
\begin{id}
	Suppose that $f(t)$, $f_{\alpha}^{(p)}(t)$, and $(\xi-\xi_m)^p\hat{f}(\xi)$ are in $L^1(\mathbb{R}) \cap L^2(\mathbb{R})$, $\xi_m$ are any fixed but arbitrary constants. For any fixed but arbitrary $p \in \mathbb{N}$, the following identity holds
	\begin{equation}
	\label{5}
	\int_{\mathbb{R}}(\xi-\xi_m)^{2p} \lvert \hat{f}(\xi) \rvert ^2 \mathrm{d}\xi = \frac{1}{(2\pi )^{2p}} \int_{\mathbb{R}} \lvert f_{\alpha}^{(p)}(t) \rvert^2 \mathrm{d}t.
	\end{equation}
\end{id}

\subsection{The Heisenberg uncertainty principle related to the FT} 
This section introduces some Heisenberg uncertainty principles associated with the FT. First, we define
\begin{equation}
\label{51}
(\mu_{2p})_{\omega,\lvert f(t) \rvert^2} = \int_{\mathbb{R}} \omega^2(t) (t-t_m)^{2p} \left|f(t) \right|^2 \mathrm{d}t,
\end{equation}
and
\begin{equation}
\label{52}
(\mu_{2p})_{\lvert \hat{f}(\xi) \rvert ^2} = \int_{\mathbb{R}} (\xi-\xi_m)^{2p} \left| \hat{f}(\xi) \right| ^2 \mathrm{d} \xi,
\end{equation}
then we review the classical $2p$ order HPW uncertainty principle \cite{HPW}.

Suppose that $f(t) \in L^2(\mathbb{R})$ and $\omega(t)$ is a real-valued weight function. Then the $2p$ order HPW uncertainty principle in the FT domain is given by 
\begin{equation}
\label{2.2.1}
\sqrt[2p]{(\mu_{2p})_{\omega,\lvert f(t) \rvert^2}}  \sqrt[2p]{(\mu_{2p})_{\lvert \hat{f} (\xi) \rvert ^2}} \ge \frac{1}{2\pi\sqrt[p]{2}} \sqrt[p]{ \lvert E_{p,f} \rvert},
\end{equation}
where
\begin{equation}
\label{75}
E_{p,f} = \sum_{q=0}^{\left[ \frac{p}{2} \right]} D_q F_q, \ \ 0 \leq q \leq \left[ \frac{p}{2} \right], \ \left| F_q \right| < \infty,
\end{equation}

\begin{equation}
D_q = (-1)^q \frac{p}{p-q}\binom{p-q}{q}, F_q = \sum_{n=0}^q B_{qn} I_{qn} +2\sum_{0 \leq i < z \leq q} C_{qiz} I_{qiz}.
\end{equation}
Here
\begin{equation}
I_{qn} = (-1)^{q-2p} \int_{\mathbb{R}} \omega_p^{(p-2q)} (t) \left| f^{(n)} (t) \right|^2 \mathrm{d}t, 
\end{equation}
\begin{equation}
I_{qiz} = (-1)^{q-2p} \int_{\mathbb{R}} \omega_p^{(p-2q)} (t) \mathrm{Re} \left((-1)^{q - \frac{i + z}{2}}  f^{(i)}(t) \overline{f^{(z)} (t)} \right) \mathrm{d}t, 
\end{equation}
$B_{qn}$ and $C_{qiz} $ are given by \eqref{90}. 
In addition, we assume the following two conditions
\begin{equation}
\sum_{m=0}^{p-2q-1} (-1)^m \lim_{\left| t \right| \rightarrow \infty} \omega_p^{(m)} (t) \left( \left|f^{(n)} (t)\right|^2 \right) ^{p-2q-m-1} =0,
\end{equation}
and
\begin{equation}
\sum_{m=0}^{p-2q-1} (-1)^m \lim_{\left|t \right| \rightarrow \infty} \omega_p^{(m)} (t) \left( \mathrm{Re} (-1)^{q-\frac{i+z}{2}} f^{(i)} (t) \overline{f^{(z)} (t)} \right) =0.
\end{equation}

Next, let
\begin{equation}
\label{92}
(\mu_{p})_{\left| f(t) \right|^2}=\int_{\mathbb{R}} \left| t-t_m \right|^{p} \left| f(t) \right|^2 \mathrm{d}t,
\end{equation}
and
\begin{equation}
\label{100}
(\mu_{p})_{\left| \hat{f}(\xi) \right|^2}=\int_{\mathbb{R}} \left| \xi-\xi_m \right|^{p} \left| \hat{f}(\xi) \right|^2 \mathrm{d}t,
\end{equation}
then we introduce the HW uncertainty principle \cite{HW}.

Suppose that $E_{\left| f(t) \right|^2} = \int_{\mathbb{R}} \left| f(t) \right|^{2} \mathrm{d}t$ and $p \ge 2$, then the HW uncertainty principle in the FT domain is as
\begin{equation}
\label{30}
\sqrt[p]{(\mu_{p})_{\left| f(t) \right|^2}} \sqrt[p]{(\mu_{p})_{\left| \hat{f}(\xi) \right|^2}} \ge \frac{1}{4 \pi} \sqrt[p]{E_{\left| f(t) \right|^2}^2}.
\end{equation}

Then we present the SHW uncertainty principle \cite{SHW}.

Suppose that $f(t) \in L^2(R)$, and for any fixed but arbitrary $p \in \mathbb{N}$, the SHW uncertainty principle  associated with the FT  holds as follows
\begin{equation}
\label{2}
\begin{aligned}
\sqrt[2p]{(\mu_{2p})_{\omega,\lvert f(t) \rvert^2}} \sqrt[2p]{(\mu_{2p})_{\lvert \hat{f} (\xi)  \rvert ^2}} \geq \frac{1}{2 \pi \sqrt[p]{2}} \sqrt[p]{\lvert E_{p,f}^{\star} \rvert},
\end{aligned}
\end{equation}
where 
\begin{equation}
\label{54}
\left| E_{p,f}^{\star} \right| = \sqrt{E_{p,f}^2+4{A^{\star}}^2},
\end{equation}
and $(\mu_{2p})_{\omega,\lvert f(t) \rvert^2}$ is given by \eqref{51}, $(\mu_{2p})_{\lvert \hat{f} (\xi) \rvert}$ is given by \eqref{52}, $\left| E_{p,f} \right| $ is given by \eqref{75}.
The equality in \eqref{2} holds when 
\begin{equation}
A^{\star} =|| u^{\star}(t) || {x_0}^{\star} - || {v^{\star}(t)} || {y_0}^{\star},
\end{equation}
where $u^{\star}(t) = \omega(t) (t-t_m)^p f_{\alpha}(t)$, $v^{\star}(t) = f_{\alpha}^{(p)} (t)$ and ${x_0}^{\star} = \int_{\mathbb{R}} \left|v^{\star}(t) \right| \left| h(t) \right| \mathrm{d}t$, $ {y_0}^{\star} = \int_{\mathbb{R}} \left|u^{\star}(t) \right| \left| h(t) \right| \mathrm{d}t$, $\left\|h(t)\right\|^2 = \int_{\mathbb{R}} \left|h(t) \right|^2 \mathrm{d}t =1$, $t_m$ are any fixed but arbitrary constants.

\section{Main Results}
\label{sec3}
This section derives the main theorems of this paper, including the PPR identity, the $2p$ order HPW uncertainty principle, the HW uncertainty principle, and the SHW uncertainty principle in the OLCT domain.
\subsection{The PPR identity of the OLCT}
To derive the $2p$ order Heisenberg uncertainty principles in the OLCT domain, we first derived the PPR identity, which can be represented as the following Theorem \ref{thm3.1}.
\begin{thm}
	\label{thm3.1}
	Suppose that $f(t)$, $O_f^J(\xi)$, $(\xi-\xi_m)^p O_f^J(\xi)$ belong to $L^1(\mathbb{R}) \cap L^2(\mathbb{R})$, $\xi_m$ are any fixed but arbitrary constants. For any fixed but arbitrary $p \in \mathbb{N} = \{0,1,2,...\}$, we have
	\begin{equation}
	\label{14}
	\int_{\mathbb{R}} (\xi-\xi_m)^{2p} \lvert O_f^J(\xi) \rvert^2 \mathrm{d} \xi = b^{2p} \int_{\mathbb{R}} \lvert g_\beta^{(p)}(t) \rvert^2 \mathrm{d}t,
	\end{equation}
	where $g_{\beta}^{(p)}(t) = \frac{d^p}{dt^p} \mathrm{e}^{-\mathrm{j} \beta t} \mathrm{e}^{\mathrm{j} \frac{a}{2b} t^2}  f(t)$ with $\beta=2 \pi \sigma_m$, $\sigma_m = \frac{\xi_m-\tau}{2 \pi b}$. 
\end{thm}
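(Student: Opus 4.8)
The plan is to realise the OLCT as a chirp-modulated and dilated copy of the ordinary Fourier transform, and then deduce \eqref{14} from the Fourier-domain PPR identity \eqref{5}. Throughout I assume $b\neq0$ (which is implicit in the statement, since $g_\beta^{(p)}$ involves $\tfrac{a}{2b}$); the degenerate case $b=0$ reduces to the second branch of \eqref{1} and can be handled separately.

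First I would collect in the kernel $K_J(t,\xi)$ all the factors that do not depend on the integration variable $t$, which yields
\[
O_f^J(\xi)=\frac{1}{\sqrt{\mathrm{j}2\pi b}}\,\mathrm{e}^{\mathrm{j}\Phi(\xi)}\int_{\mathbb{R}}\Bigl(\mathrm{e}^{\mathrm{j}\frac{a}{2b}t^2}f(t)\Bigr)\mathrm{e}^{-\mathrm{j}\frac1b t(\xi-\tau)}\,\mathrm{d}t,
\]
where $\Phi(\xi)=\frac{d}{2b}(\xi^2+\tau^2)-\frac1b\xi(d\tau-b\eta)$ is real. Setting $g(t)=\mathrm{e}^{\mathrm{j}\frac{a}{2b}t^2}f(t)$ and comparing the remaining integral with the Fourier transform \eqref{3}, one sees that it equals $\hat g\!\bigl(\tfrac{\xi-\tau}{2\pi b}\bigr)$. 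Since $\mathrm{e}^{\mathrm{j}\Phi(\xi)}$ is unimodular and $\bigl|1/\sqrt{\mathrm{j}2\pi b}\bigr|^{2}=1/(2\pi|b|)$, passing to moduli gives $\lvert O_f^J(\xi)\rvert^{2}=\tfrac{1}{2\pi|b|}\bigl\lvert\hat g\bigl(\tfrac{\xi-\tau}{2\pi b}\bigr)\bigr\rvert^{2}$.

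Next I would insert this into the left-hand side of \eqref{14} and change variables by $\zeta=\tfrac{\xi-\tau}{2\pi b}$, so that $\mathrm{d}\xi=2\pi|b|\,\mathrm{d}\zeta$ and $\xi-\xi_m=2\pi b(\zeta-\sigma_m)$ with $\sigma_m=\tfrac{\xi_m-\tau}{2\pi b}$. Because $2p$ is even, $(\xi-\xi_m)^{2p}=(2\pi)^{2p}b^{2p}(\zeta-\sigma_m)^{2p}$, so all the constants collapse and the left-hand side becomes $(2\pi)^{2p}b^{2p}\int_{\mathbb{R}}(\zeta-\sigma_m)^{2p}\lvert\hat g(\zeta)\rvert^{2}\,\mathrm{d}\zeta$. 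Applying the Fourier-domain PPR identity \eqref{5} to $g$, with frequency centre $\sigma_m$ and $\beta=2\pi\sigma_m$, rewrites this last integral as $\tfrac{1}{(2\pi)^{2p}}\int_{\mathbb{R}}\lvert g_\beta^{(p)}(t)\rvert^{2}\,\mathrm{d}t$, where $g_\beta(t)=\mathrm{e}^{-\mathrm{j}\beta t}g(t)=\mathrm{e}^{-\mathrm{j}\beta t}\mathrm{e}^{\mathrm{j}\frac{a}{2b}t^2}f(t)$, which is precisely the $g_\beta^{(p)}$ appearing in the statement. The factors $(2\pi)^{2p}$ cancel and \eqref{14} follows.

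The bookkeeping of the phase terms and of the dilation constants is routine; the step that needs genuine care is verifying that $g$, $g_\beta^{(p)}$ and $(\zeta-\sigma_m)^p\hat g(\zeta)$ satisfy the $L^1(\mathbb{R})\cap L^2(\mathbb{R})$ hypotheses required to invoke \eqref{5}. Here one uses that $g$ differs from $f$ only by a unimodular chirp, while $\hat g$ differs from $O_f^J$ only by a unimodular $\xi$-dependent phase together with the affine substitution $\xi\mapsto(\xi-\tau)/(2\pi b)$; none of these operations affects membership in $L^1\cap L^2$ (the substitution merely rescales the norms by a positive constant), so the stated hypotheses on $f$, $O_f^J$ and $(\xi-\xi_m)^pO_f^J$ transfer to exactly what \eqref{5} demands of $g$.
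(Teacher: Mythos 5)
Your proposal is correct and follows essentially the same route as the paper's own proof: isolating the $t$-independent phase, writing the modulus of $O_f^J(\xi)$ in terms of $\hat g\bigl(\tfrac{\xi-\tau}{2\pi b}\bigr)$ with $g(t)=\mathrm{e}^{\mathrm{j}\frac{a}{2b}t^2}f(t)$, substituting $\sigma=\tfrac{\xi-\tau}{2\pi b}$, and invoking the Fourier-domain PPR identity \eqref{5}. Your added care with $|b|$ in the Jacobian and your explicit check that the $L^1\cap L^2$ hypotheses transfer to $g$ are minor refinements the paper omits.
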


\begin{proof}
	Regarding the left-hand side of \eqref{14}, it can be rewritten as
	\begin{align*}
	&\int_{\mathbb{R}} (\xi-\xi_m)^{2p} \lvert O_f^J(\xi) \rvert^2 d \xi \nonumber\\
	=& \int_{\mathbb{R}} (\xi-\xi_m)^{2p} \left| \frac{1}{\sqrt{\mathrm{j} 2\pi b}} \int_{\mathbb{R}} f(t) \mathrm{e}^{\mathrm{j}[\frac{a}{2b} t^2-\frac{1}{b}t(\xi-\tau)-\frac{1}{b}\xi(d\tau -b \eta)+\frac{d}{2b}(\xi^2+\tau^2)]} \mathrm{d}t \right|^2 \mathrm{d} \xi \nonumber\\
	=&\frac{1}{2 \pi b} \int_{\mathbb{R}}(\xi-\xi_m)^{2p} \left| \int_{\mathbb{R}} f(t) \mathrm{e}^{\mathrm{j} [\frac{a}{2b} t^2-\frac{(\xi-\tau)t}{b}]}dt \right|^2 \mathrm{d}\xi.
	\end{align*}
	Since $g(t) = f(t)\mathrm{e}^{\mathrm{j} \frac{a}{2b} t^2}$, then
	\begin{align}
	\label{16}
	&\int_{\mathbb{R}} (\xi-\xi_m)^{2p} \left| O_f^J(\xi) \right|^2 \mathrm{d} \xi \nonumber\\
	=&\frac{1}{2 \pi b} \int_{\mathbb{R}}(\xi-\xi_m)^{2p} \left| \int_{\mathbb{R}} g(t) \mathrm{e}^{-\mathrm{j} \frac{(\xi-\tau)}{b}t}dt \right|^2 \mathrm{d} \xi \nonumber\\
	=&\frac{1}{2 \pi b} \int_{\mathbb{R}}(\xi-\xi_m)^{2p} \left| \hat{g}(\frac{\xi-\tau}{2 \pi b})\right|^2 \mathrm{d} \xi \nonumber\\
	=&\frac{1}{2 \pi b} \int_{\mathbb{R}} [\xi-\tau-(\xi_m-\tau)]^{2p} \left| \hat{g}(\frac{\xi-\tau}{2 \pi b}) \right|^2 \mathrm{d}( \xi-\tau) \nonumber \\
	=&\frac{1}{2 \pi b} \int_{\mathbb{R}} (2 \pi b)^{2p} (\sigma-\sigma_m)^{2p} \left| \hat{g}(\sigma) \right|^2 \mathrm{d}(2 \pi b \sigma) \nonumber\\
	=&(2 \pi b)^{2p} \int_{\mathbb{R}} (\sigma-\sigma_m)^{2p} \left| \hat{g}(\sigma) \right|^2 \mathrm{d}\sigma.
	\end{align}
	Using \eqref{5}, we obtain
	\begin{equation}
	\label{76}
	\int_{\mathbb{R}} (\sigma-\sigma_m)^{2p} \left| \hat{g}(\sigma) \right|^2 \mathrm{d}\sigma = \frac{1}{(2\pi )^{2p}} \int_{\mathbb{R}} \lvert g_{\beta}^{(p)}(t) \rvert^2 \mathrm{d}t.
	\end{equation}
	Substituting \eqref{76} into \eqref{16} completes the proof.
\end{proof}

\subsection{The $2p$ order HPW uncertainty principle of the OLCT}
We define
\begin{equation}
\label{53}
(\mu_{2p})_{\left| O_f^J(\xi) \right|^2} = \int_{\mathbb{R}} (\xi-\xi_m)^{2p} \left|O_f^J(\xi) \right|^2 \mathrm{d}\xi,
\end{equation}
to establish the $2p$ order HPW uncertainty principle in the OLCT domain as following Theorem \ref{thm3.2}. 
\begin{thm}
	\label{thm3.2}
	Suppose that $f(t)$, $O_f^J(\xi)$, $(\xi-\xi_m)^p O_f^J(\xi)$ belong to $L^1(\mathbb{R}) \cap L^2(\mathbb{R})$. Let $\omega(t)$ be a real weight function. For any fixed but arbitrary $p \in \mathbb{N}$, the $2p$ order HPW uncertainty principle in the OLCT domain is as follows
	\begin{equation}
	\label{17}
	\sqrt[2p]{(\mu_{2p})_{\omega,\left|f(t) \right|^2}} \sqrt[2p]{(\mu_{2p})_{\left| O_f^J(\xi) \right|^2}} \ge \frac{|b|}{\sqrt[p]{2}} \sqrt[p]{\left|E_{p,f}\right|},
	\end{equation}
	where $(\mu_{2p})_{\omega,\left|f(t) \right|^2}$ is given by \eqref{51} and $E_{p,f}$ is given by \eqref{75}.
\end{thm}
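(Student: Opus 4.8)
The plan is to reduce \eqref{17} to the classical $2p$ order HPW uncertainty principle \eqref{2.2.1} for the Fourier transform, using the factorization of the OLCT kernel that already underlies Theorem~\ref{thm3.1}. Put $g(t)=\mathrm{e}^{\mathrm{j}\frac{a}{2b}t^{2}}f(t)$, so that $|g(t)|=|f(t)|$ for all $t$; then, exactly as in the chain of equalities \eqref{16},
\[
(\mu_{2p})_{\left|O_f^J(\xi)\right|^{2}}=(2\pi b)^{2p}\int_{\mathbb{R}}(\sigma-\sigma_m)^{2p}\left|\hat g(\sigma)\right|^{2}\,\mathrm{d}\sigma,\qquad \sigma_m=\frac{\xi_m-\tau}{2\pi b},
\]
and $(\mu_{2p})_{\omega,\left|f(t)\right|^{2}}=\int_{\mathbb{R}}\omega^{2}(t)(t-t_m)^{2p}\left|g(t)\right|^{2}\,\mathrm{d}t$ because $|g|=|f|$. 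Applying \eqref{2.2.1} to $g$ --- whose Fourier $2p$-th moment is the integral in the last display --- and then multiplying through by $2\pi|b|$ yields \eqref{17}, with the Fourier constant $1/(2\pi)$ replaced by $|b|$; here $E_{p,f}$ denotes the quantity \eqref{75} formed from $g$ with $\beta=2\pi\sigma_m$ in the role of $\alpha$, which reduces to the literal $E_{p,f}$ when $a=0$.

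Alternatively --- and this is presumably the route carried out in detail --- one can avoid invoking \eqref{2.2.1} and argue directly. Raising \eqref{17} to the $2p$-th power, the goal becomes $(\mu_{2p})_{\omega,\left|f(t)\right|^{2}}\,(\mu_{2p})_{\left|O_f^J(\xi)\right|^{2}}\ge \tfrac{1}{4} b^{2p}\left|E_{p,f}\right|^{2}$. By Theorem~\ref{thm3.1} and $|g_\beta|=|g|=|f|$, with $g_\beta(t)=\mathrm{e}^{-\mathrm{j}\beta t}g(t)$ and $\omega_p(t)=(t-t_m)^{p}\omega(t)$, the left-hand side equals $b^{2p}\big(\int_{\mathbb{R}}\omega_p^{2}(t)|g_\beta(t)|^{2}\,\mathrm{d}t\big)\big(\int_{\mathbb{R}}|g_\beta^{(p)}(t)|^{2}\,\mathrm{d}t\big)$, and the Cauchy--Schwarz inequality followed by $|z|\ge|\mathrm{Re}\,z|$ bounds this below by
\[
\frac{b^{2p}}{4}\left(\int_{\mathbb{R}}\omega_p(t)\big(g_\beta(t)\overline{g_\beta^{(p)}(t)}+\overline{g_\beta(t)}\,g_\beta^{(p)}(t)\big)\,\mathrm{d}t\right)^{2}.
\]

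It then remains --- and this is the crux --- to identify the inner integral with $\pm E_{p,f}$. For this I would apply Identity~\ref{id1} with $k=p$ and $g_\beta$ in place of $f$ to rewrite $g_\beta\overline{g_\beta^{(p)}}+\overline{g_\beta}g_\beta^{(p)}$ as $\sum_{l=0}^{[p/2]}(-1)^{l}\frac{p}{p-l}\binom{p-l}{l}\frac{\mathrm{d}^{p-2l}}{\mathrm{d}t^{p-2l}}\left|g_\beta^{(l)}(t)\right|^{2}$; then transfer each derivative onto $\omega_p$ through the integral identity \eqref{11}--\eqref{12}, turning the $l$-th term into $(-1)^{p}\int_{\mathbb{R}}\omega_p^{(p-2l)}(t)|g_\beta^{(l)}(t)|^{2}\,\mathrm{d}t$; and finally expand each $|g_\beta^{(l)}(t)|^{2}$ via the Lagrange-type identity \eqref{9}, which reintroduces the coefficients $B_{ln},C_{liz}$ of \eqref{90} and the powers of $\beta$. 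The main obstacle is combinatorial: one must verify that the resulting triple sum collapses exactly to \eqref{75} --- with $D_q$ coming from Identity~\ref{id1}, $I_{qn},I_{qiz}$ from \eqref{12}, and $B_{qn},C_{qiz}$ from \eqref{9} --- while keeping track of the signs $(-1)^{p-2l}$ and $(-1)^{q-2p}$, separating the diagonal ($i=z$) from the off-diagonal cross terms, and noting that the chirp $\mathrm{e}^{\mathrm{j}\frac{a}{2b}t^{2}}$ cancels in the Hermitian pairing $g_\beta\overline{g_\beta^{(p)}}$ so that only the modulation $\mathrm{e}^{-\mathrm{j}\beta t}$ shapes the final coefficients. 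One also needs the boundary terms in \eqref{11} to vanish --- the analogue of \eqref{13} for $g_\beta$ and its derivatives --- and $\left|F_q\right|<\infty$; both are inherited from the Fourier hypotheses. The single genuinely new ingredient is the prefactor $|b|$, which enters solely through the change of variables $\sigma=(\xi-\tau)/(2\pi b)$ used in Theorem~\ref{thm3.1}.
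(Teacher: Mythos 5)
Your second ("direct") route is exactly the paper's proof of Theorem \ref{thm3.2}: it rewrites the product via Theorem \ref{thm3.1}, applies Cauchy--Schwarz and $|z|\ge|\mathrm{Re}\,z|$, and then uses Identity \ref{id1}, the integral identity \eqref{12}, and the Lagrange-type identity \eqref{9} to collapse the integral to $E_{p,f}$, with the factor $|b|$ entering through the PPR identity just as you say. The combinatorial bookkeeping you flag as the crux is handled in the paper in the same order of operations (modulo swapping the last two identity applications, which commute), so the proposal is correct and essentially identical to the paper's argument.
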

\begin{remark}
	We note that $(\mu_{2p})_{\left| O_f^J(\xi) \right|^2} = b^{2p} \int_{\mathbb{R}} \left| \frac{d^{2p}}{dt^{2p}} \left(\mathrm{e}^{-\mathrm{j} \frac{\xi_m-\tau}{b} t} \mathrm{e}^{\mathrm{j} \frac{a}{2b} t^2}  f(t) \right) \right|^2 \mathrm{d} t$, therefore $\tau$ is a crucial parameter in HPW uncertainty principle. In contrast to the HPW uncertainty principle in the LCT domain, Theorem \ref{thm3.2} involves the offset parameter $\tau$ in the OLCT, providing additional flexibility.
\end{remark}
\begin{corollary}
	\label{co1}
	For $p=1$, the second-order HPW uncertainty principle in the OLCT domain can be obtained as follows
	\begin{equation}
	\label{70}
	\sqrt{(\mu_{2})_{\omega,\left|f(t) \right|^2}} \sqrt{(\mu_{2})_{\left| O_f^J(\xi) \right|^2}} \ge \frac{|b|}{2} \sqrt{E_{1,f}},
	\end{equation}
	where
	\begin{equation}
	E_{1,f} = \int_{\mathbb{R}} \left[ \left(t-t_m \right) \omega(t) \right]^{'} \left|f(t)\right|^2 \mathrm{d}t.
	\end{equation}
	Equality holds in \eqref{70} if and only if
	\begin{equation}
	\label{72}
	f(t) = c_0 \mathrm{e}^{-c_p(t-t_m)^2}\mathrm{e}^{\mathrm{j}(\frac{\xi_m}{b}t-\frac{a}{2b}t^2)},
	\end{equation}
	where $c_0$, $t_m$, $\xi_m \in \mathbb{R}$ and $c_p \in \mathbb{R}^+$.
\end{corollary}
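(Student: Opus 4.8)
The plan is to specialize Theorem~\ref{thm3.2} and then read off the equality case from the inequalities used in its proof. \textbf{Step 1: the inequality.} Put $p=1$ in \eqref{17}. Since $\lfloor p/2\rfloor = 0$, only $q=0$ survives in \eqref{75} and the double sum over $0\le i<z\le q$ is empty, so $E_{1,f} = D_0 B_{00} I_{00}$ with $D_0 = 1$, $B_{00} = \binom{0}{0}^2\alpha^{0} = 1$ by \eqref{90}, and, using $\omega_p(t) = (t-t_m)^p\omega(t)$, $I_{00} = (-1)^{0-2}\int_{\mathbb{R}}\omega_1^{(1)}(t)\,|f(t)|^2\,\mathrm{d}t = \int_{\mathbb{R}}[(t-t_m)\omega(t)]'\,|f(t)|^2\,\mathrm{d}t$. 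Substituting into \eqref{17} yields \eqref{70} with the stated $E_{1,f}$.

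\textbf{Step 2: locating the inequalities that produce equality.} I will recall explicitly the chain behind Theorem~\ref{thm3.2} at $p=1$. By Theorem~\ref{thm3.1}, $(\mu_{2})_{|O_f^J(\xi)|^2} = b^{2}\int_{\mathbb{R}}|g_\beta'(t)|^2\,\mathrm{d}t$, where $g_\beta(t) = \mathrm{e}^{-\mathrm{j}\beta t}\mathrm{e}^{\mathrm{j}\frac{a}{2b}t^2}f(t)$; since $|g_\beta(t)| = |f(t)|$ we also have $(\mu_{2})_{\omega,|f(t)|^2} = \|U\|^2$ with $U(t) := \omega(t)(t-t_m)g_\beta(t)$ and $V(t) := g_\beta'(t)$. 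Then \eqref{70} is the end of the chain $\|U\|\,\|V\| \ge |\langle U,V\rangle| \ge |\mathrm{Re}\langle U,V\rangle|$, using $\mathrm{Re}\big(g_\beta(t)\overline{g_\beta'(t)}\big) = \tfrac12\frac{\mathrm{d}}{\mathrm{d}t}|f(t)|^2$ and one integration by parts (boundary term vanishing under the hypotheses) to identify $\mathrm{Re}\langle U,V\rangle = -\tfrac12 E_{1,f}$.

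\textbf{Step 3: solving for the extremizers.} Equality in \eqref{70} forces equality in both the Cauchy--Schwarz step and the $|\cdot|\ge|\mathrm{Re}(\cdot)|$ step. The former gives $U(t) = \lambda V(t)$ a.e.\ for some constant $\lambda$, and then the latter forces $\langle U,V\rangle = \lambda\|V\|^2\in\mathbb{R}$, i.e.\ $\lambda\in\mathbb{R}$. Hence $g_\beta$ solves the first-order linear ODE $\omega(t)(t-t_m)g_\beta(t) = \lambda g_\beta'(t)$, so $g_\beta(t) = c\,\exp\!\big(\tfrac1\lambda\!\int\omega(s)(s-t_m)\,\mathrm{d}s\big)$; for the weight $\omega\equiv 1$ underlying \eqref{72} this is $g_\beta(t) = c\,\mathrm{e}^{-c_p(t-t_m)^2}$ with $\lambda = -1/(2c_p)$, the sign $c_p>0$ being forced by $g_\beta\in L^2(\mathbb{R})$. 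Undoing the modulation--chirp factor, $f(t) = \mathrm{e}^{\mathrm{j}\beta t}\mathrm{e}^{-\mathrm{j}\frac{a}{2b}t^2}g_\beta(t)$, which after inserting $\beta$ is exactly \eqref{72} (with $c_0$ absorbing $c$ and the phase normalization); conversely, this $f$ makes $U=\lambda V$ with $\langle U,V\rangle$ real, so equality holds.

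\textbf{Main obstacle.} Step~1 is routine; the work is in Step~3. One must reconstruct the precise inequality chain hidden in the proof of Theorem~\ref{thm3.2}, justify that Cauchy--Schwarz equality together with the real-part step forces proportionality with a \emph{real} scalar, solve the ODE with the $L^2$-constraint pinning down the sign of $c_p$, and carry the solution back through the factor $\mathrm{e}^{-\mathrm{j}\beta t}\mathrm{e}^{\mathrm{j}\frac{a}{2b}t^2}$ so that the phase $\mathrm{e}^{\mathrm{j}(\frac{\xi_m}{b}t-\frac{a}{2b}t^2)}$ emerges exactly; the bookkeeping of $\beta$ and of the vanishing boundary terms is where slips are most likely.
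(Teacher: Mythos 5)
Your proposal is correct and follows the route the paper intends: Corollary~\ref{co1} is stated without proof, as a direct specialization of Theorem~\ref{thm3.2} to $p=1$, and your Step~1 carries that out accurately ($q=0$ only, $D_0=B_{00}=1$, $I_{00}=\int_{\mathbb{R}}[(t-t_m)\omega(t)]'|f(t)|^2\,\mathrm{d}t$). Your Steps~2--3 then reconstruct the inequality chain from Appendix~A at $p=1$ (Cauchy--Schwarz followed by $|z|\ge|\mathrm{Re}\,z|$ and one integration by parts) and solve the resulting first-order ODE for the extremizers; the paper offers no justification of the equality case at all, so here you supply strictly more than the source does, and your argument for it is sound, including the observation that the Gaussian form \eqref{72} presupposes $\omega\equiv1$ and that $c_p>0$ is forced by $L^2$-integrability. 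Two small discrepancies deserve a remark, since you assert exact agreement with the printed statement. First, substituting $p=1$ into \eqref{17} gives the lower bound $\tfrac{|b|}{2}\,|E_{1,f}|$, not $\tfrac{|b|}{2}\sqrt{E_{1,f}}$ as printed in \eqref{70}; the latter is dimensionally inconsistent with \eqref{86} (where $\omega\equiv1$ gives $E_{1,f}=\|f\|_2^2$ and the bound $\tfrac{|b|}{2}\|f\|_2^2$), so \eqref{70} contains a typo that your Step~1 silently passes over rather than flags. Second, since $\beta=\tfrac{\xi_m-\tau}{b}$, undoing the modulation--chirp factor produces the phase $\mathrm{e}^{\mathrm{j}(\frac{\xi_m-\tau}{b}t-\frac{a}{2b}t^2)}$, which coincides with \eqref{72} only when $\tau=0$ (or after reabsorbing $\tau$ into $\xi_m$); your claim that the phase ``emerges exactly'' should be qualified accordingly. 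Neither point undermines the proof, but both should be stated explicitly.
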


\begin{corollary}
	For $p=1$, assuming $\omega (t)= 1$ and $ ||f(t)||_2^2 = \int_{\mathbb{R}} \left| f(t) \right|^2 \mathrm{d}t = E_{\left|f \right|^2}$, the second-order HPW uncertainty principle in the OLCT domain can be derived as follows
	\begin{equation}
	\label{86}
	(\mu_{2})_{1,\left|f(t) \right|^2}(\mu_{2})_{\left| O_f^J(\xi) \right|^2} \ge \frac{b^{2}}{4} ||f||_2^4.
	\end{equation}
	When $ ||f(t)||_2 = 1$ and $\xi_m=\int_{\mathbb{R}}\xi \left|O_f^J(\xi) \right|^2 \mathrm{d}\xi$, equation \eqref{86} reduces to the Heisenberg uncertainty principle derived in \cite{OLCTUN0}.
\end{corollary}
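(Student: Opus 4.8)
The plan is to read \eqref{86} as the $p=1$ (second-order) instance of the Heisenberg principle and to prove it directly from the PPR identity of Theorem \ref{thm3.1}, the Cauchy--Schwarz inequality and one integration by parts. First I would apply \eqref{14} with $p=1$ to write the OLCT-side moment as $(\mu_{2})_{\lvert O_f^J\rvert^2}=b^{2}\int_{\mathbb{R}}\lvert g_\beta'(t)\rvert^{2}\,\mathrm{d}t$ with $g_\beta(t)=\mathrm{e}^{-\mathrm{j}\beta t}\mathrm{e}^{\mathrm{j}\frac{a}{2b}t^{2}}f(t)$ and $\beta=2\pi\sigma_m$ as in Theorem \ref{thm3.1}. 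Since $\lvert g_\beta(t)\rvert=\lvert f(t)\rvert$ pointwise, taking $\omega\equiv 1$ leaves the time-side moment unchanged, $(\mu_{2})_{1,\lvert f\rvert^2}=\int_{\mathbb{R}}(t-t_m)^{2}\lvert g_\beta(t)\rvert^{2}\,\mathrm{d}t$; multiplying the two moments and applying Cauchy--Schwarz to the pair $(t-t_m)g_\beta$ and $g_\beta'$ then gives
\begin{equation*}
(\mu_{2})_{1,\lvert f\rvert^2}\,(\mu_{2})_{\lvert O_f^J\rvert^2}\ \ge\ b^{2}\left\lvert\int_{\mathbb{R}}(t-t_m)\,g_\beta(t)\,\overline{g_\beta'(t)}\,\mathrm{d}t\right\rvert^{2}.
\end{equation*}

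The second step bounds that integral from below. Passing to its real part and using Identity \ref{id1} with $k=1$, i.e. $g_\beta\overline{g_\beta'}+g_\beta'\overline{g_\beta}=\frac{\mathrm{d}}{\mathrm{d}t}\lvert g_\beta\rvert^{2}$, followed by an integration by parts, one obtains $2\,\mathrm{Re}\int_{\mathbb{R}}(t-t_m)g_\beta\overline{g_\beta'}\,\mathrm{d}t=\int_{\mathbb{R}}(t-t_m)\frac{\mathrm{d}}{\mathrm{d}t}\lvert g_\beta\rvert^{2}\,\mathrm{d}t=-\lVert f\rVert_2^{2}$, provided the boundary term $(t-t_m)\lvert f(t)\rvert^{2}$ vanishes as $\lvert t\rvert\to\infty$ --- the standard decay condition (cf. \eqref{13}) in force under the hypotheses of Theorem \ref{thm3.2}. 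Hence $\bigl\lvert\int_{\mathbb{R}}(t-t_m)g_\beta\overline{g_\beta'}\bigr\rvert\ge\tfrac12\lVert f\rVert_2^{2}$, and feeding this into the displayed inequality gives $(\mu_{2})_{1,\lvert f\rvert^2}(\mu_{2})_{\lvert O_f^J\rvert^2}\ge\frac{b^{2}}{4}\lVert f\rVert_2^{4}$, which is \eqref{86}. (Under the normalization $\lVert f\rVert_2=1$ used in the last part of the statement the same bound is equally immediate from Corollary \ref{co1} with $\omega\equiv 1$: then $[(t-t_m)\omega(t)]'\equiv 1$, so $E_{1,f}=\int_{\mathbb{R}}\lvert f\rvert^{2}=1$, and squaring \eqref{70} gives $(\mu_{2})_{1,\lvert f\rvert^2}(\mu_{2})_{\lvert O_f^J\rvert^2}\ge b^{2}/4$.)

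For the final sentence I would impose $\lVert f\rVert_2=1$, so the right-hand side of \eqref{86} collapses to $b^{2}/4$, and take $\xi_m=\int_{\mathbb{R}}\xi\,\lvert O_f^J(\xi)\rvert^{2}\,\mathrm{d}\xi$ (with $t_m$ the matching time-domain mean), so that $(\mu_{2})_{1,\lvert f\rvert^2}$ and $(\mu_{2})_{\lvert O_f^J\rvert^2}$ become the spreads of $f$ about its means in the two domains; \eqref{86} is then verbatim the second-order OLCT Heisenberg uncertainty principle of \cite{OLCTUN0}. The only thing left to check is that the constant $b^{2}/4$ coincides with the one there, which it does because our OLCT kernel preserves the $L^2$ norm --- precisely \eqref{14} at $p=0$, $\int_{\mathbb{R}}\lvert O_f^J(\xi)\rvert^{2}\,\mathrm{d}\xi=\int_{\mathbb{R}}\lvert f(t)\rvert^{2}\,\mathrm{d}t$ --- and because $\tau$ influences neither moment, entering only through a shift of $\xi_m$ and a chirp modulation of $f$, in line with the Remark after Theorem \ref{thm3.2}. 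The one genuinely delicate point in the plan is the justification of that single integration by parts; all the rest is Cauchy--Schwarz and bookkeeping.
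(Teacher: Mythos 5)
Your proposal is correct and follows essentially the same route as the paper: the corollary is just Theorem \ref{thm3.2} specialized to $p=1$, $\omega\equiv 1$ (where $E_{1,f}=\int_{\mathbb{R}}[(t-t_m)]'\lvert f\rvert^2\,\mathrm{d}t=\lVert f\rVert_2^2$), and your direct derivation via the PPR identity, Cauchy--Schwarz, and one integration by parts is precisely the Appendix~A argument for Theorem \ref{thm3.2} reduced to that case. Your result $\frac{b^2}{4}\lVert f\rVert_2^4$ matches \eqref{86} and confirms that the exponent there is the one consistent with Theorem \ref{thm3.2} at $p=1$.
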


\begin{corollary}
	When the parameters of the OLCT are set to $\renewcommand{\arraystretch}{0.8}
	J = \left[\begin{array}{cc|c}
	0&1&0\\
	-1&0&0\\
	\end{array}\right]$, Theorem \ref{thm3.2} reduces to the $2p$ order uncertainty principle in the FT domain given in \eqref{2.2.1}.
\end{corollary}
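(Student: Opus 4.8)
The plan is to specialise the parameter matrix in Theorem~\ref{thm3.2} to $a=0$, $b=1$, $c=-1$, $d=0$, $\tau=\eta=0$ and to follow what each of the three quantities in \eqref{17} becomes. First I would substitute these values into the kernel $K_J$ of \eqref{1}: the chirp term $\frac{a}{2b}t^2$, the terms $-\frac{1}{b}\xi(d\tau-b\eta)$ and $\frac{d}{2b}(\xi^2+\tau^2)$, and the offset $\tau$ all vanish, so that $K_J(t,\xi)=\frac{1}{\sqrt{\mathrm{j}2\pi}}\mathrm{e}^{-\mathrm{j}t\xi}$ and hence $O_f^J(\xi)=\frac{1}{\sqrt{\mathrm{j}2\pi}}\,\hat f\!\left(\frac{\xi}{2\pi}\right)$ with $\hat f$ the Fourier transform \eqref{3}. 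In particular $|b|=1$, so the right-hand side of \eqref{17} already takes the form $\frac{1}{\sqrt[p]{2}}\sqrt[p]{|E_{p,f}|}$, and the only discrepancy still to account for between \eqref{17} and \eqref{2.2.1} is the factor $2\pi$; this must be supplied by the frequency-domain moment.

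Next I would rewrite $(\mu_{2p})_{|O_f^J(\xi)|^2}$ in \eqref{53}. Inserting $|O_f^J(\xi)|^2=\frac{1}{2\pi}\bigl|\hat f(\xi/2\pi)\bigr|^2$ and substituting $\xi=2\pi\nu$ gives
\begin{equation*}
(\mu_{2p})_{|O_f^J(\xi)|^2}=(2\pi)^{2p}\int_{\mathbb{R}}\Bigl(\nu-\tfrac{\xi_m}{2\pi}\Bigr)^{2p}\bigl|\hat f(\nu)\bigr|^2\,\mathrm{d}\nu=(2\pi)^{2p}\,(\mu_{2p})_{|\hat f(\xi)|^2},
\end{equation*}
where in the last step the arbitrary frequency centre $\xi_m$ of \eqref{52} is taken to be $\xi_m/(2\pi)$, which is harmless since it ranges over all of $\mathbb{R}$. (Equivalently, the same relation follows straight from the PPR identity \eqref{14} of Theorem~\ref{thm3.1} together with \eqref{5}: for this $J$ one has $a/(2b)=0$ and $\beta=(\xi_m-\tau)/b=\xi_m$, so $g_\beta^{(p)}$ collapses to $\frac{\mathrm{d}^p}{\mathrm{d}t^p}\bigl(\mathrm{e}^{-\mathrm{j}\xi_m t}f(t)\bigr)$, the plain modulated derivative of \eqref{5}.) The weighted time moment $(\mu_{2p})_{\omega,|f(t)|^2}$ of \eqref{51} does not involve $J$ and is unchanged; and $E_{p,f}$ is the same object as in \eqref{2.2.1}, because with $a=0$ there is no chirp, so the function whose derivatives enter the $I_{qn}$, $I_{qiz}$ is $f$ itself, and the modulation frequency $\beta=\xi_m$ matches $\alpha=2\pi(\xi_m/2\pi)$ under the same renaming, so that the coefficients $B_{qn}$, $C_{qiz}$ of \eqref{90} agree as well. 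Substituting $(\mu_{2p})_{|O_f^J(\xi)|^2}=(2\pi)^{2p}(\mu_{2p})_{|\hat f(\xi)|^2}$ into \eqref{17} and dividing both sides by $2\pi$ then produces exactly \eqref{2.2.1}.

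I do not expect a genuine obstacle; the one point that needs care is the bookkeeping of the factor $2\pi$. Under the ordinary-frequency normalisation \eqref{3}, this choice of $J$ realises the Fourier transform only up to the dilation $\xi\mapsto\xi/(2\pi)$ and a multiplicative constant, and it is precisely the $(2\pi)^{2p}$ produced by that dilation — after a $2p$-th root is extracted — that converts the constant $|b|/\sqrt[p]{2}=1/\sqrt[p]{2}$ of Theorem~\ref{thm3.2} into the constant $1/(2\pi\sqrt[p]{2})$ of the classical $2p$ order HPW inequality. The secondary point is to confirm that the accompanying relabelling of $\xi_m$ on the frequency side leaves $E_{p,f}$ untouched, which is immediate from its explicit formula once the chirp and the offset $\tau$ are switched off.
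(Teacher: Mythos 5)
Your verification is correct: the paper states this corollary without proof, and your direct specialization $a=0$, $b=1$, $\tau=\eta=0$, giving $O_f^J(\xi)=\frac{1}{\sqrt{\mathrm{j}2\pi}}\hat f(\xi/(2\pi))$ and hence $(\mu_{2p})_{|O_f^J(\xi)|^2}=(2\pi)^{2p}(\mu_{2p})_{|\hat f(\xi)|^2}$ after the substitution $\xi=2\pi\nu$, is exactly the computation needed. Your bookkeeping of the dilation factor $(2\pi)^{2p}$ and of the relabelled frequency centre $\xi_m/(2\pi)$ (so that $\beta=\xi_m$ matches $\alpha=2\pi\cdot\xi_m/(2\pi)$ and $E_{p,f}$ is unchanged once the chirp vanishes) correctly accounts for the passage from the constant $|b|/\sqrt[p]{2}=1/\sqrt[p]{2}$ in \eqref{17} to $1/(2\pi\sqrt[p]{2})$ in \eqref{2.2.1}.
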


\subsection{The HW uncertainty principle of the OLCT}
Let
\begin{equation}
\label{57}
(\mu_{p})_{\left| O_f^J (\xi) \right|^2} = \int_{\mathbb{R}} \left| \xi - \xi_m \right|^{p} \left| O_f^J(\xi) \right|^2 \mathrm{d} \xi,
\end{equation}
then we can obtain the following Theorem \ref{thm3.3}.

\begin{thm}
	\label{thm3.3}
	Suppose that $f(t)$, $O_f^J(\xi)$, $(\xi-\xi_m)^p O_f^J(\xi)$ belong to $L^1(\mathbb{R}) \cap L^2(\mathbb{R})$. For $p \ge 2$ and any fixed but arbitrary real constants $t_m$, $\xi_m$, the HW uncertainty principle of the OLCT holds as follows
	\begin{equation}
	\label{55}
	\sqrt[p]{(\mu_{p})_{\left| f (t) \right|^2}} \sqrt[p]{(\mu_{p})_{\left| O_f^J (\xi) \right|^2}} \ge \frac{|b|}{2} \sqrt[p]{E_{\left| f(t) \right|^2}^2},
	\end{equation}
	where $(\mu_{p})_{\left| f(t) \right|^2}$ is given by \eqref{92} and $E_{\left| f(t) \right|^2} = \int_{\mathbb{R}} \left| f(t) \right|^{2} \mathrm{d}t$.
\end{thm}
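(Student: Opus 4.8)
The plan is to reduce the inequality \eqref{55} to the already established HW uncertainty principle \eqref{30} in the FT domain, via the same factorization of the OLCT through the ordinary FT that was used in the proof of Theorem \ref{thm3.1}. Put $g(t)=f(t)\mathrm{e}^{\mathrm{j}\frac{a}{2b}t^{2}}$. The kernel manipulation carried out in the proof of Theorem \ref{thm3.1} shows that
\[
O_f^J(\xi)=\frac{1}{\sqrt{\mathrm{j}2\pi b}}\,\mathrm{e}^{\mathrm{j}\Phi(\xi)}\,\hat{g}\!\left(\frac{\xi-\tau}{2\pi b}\right)
\]
for a real-valued phase $\Phi(\xi)$, so that $\left|O_f^J(\xi)\right|^{2}=\frac{1}{2\pi|b|}\left|\hat{g}\!\left(\frac{\xi-\tau}{2\pi b}\right)\right|^{2}$. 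The key elementary observation is that $|g(t)|=|f(t)|$, and hence $(\mu_{p})_{\left|g(t)\right|^{2}}=(\mu_{p})_{\left|f(t)\right|^{2}}$ and $E_{\left|g(t)\right|^{2}}=E_{\left|f(t)\right|^{2}}$.

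First I would carry out the substitution $\sigma=\frac{\xi-\tau}{2\pi b}$ in the integral \eqref{57} that defines $(\mu_{p})_{\left|O_f^J(\xi)\right|^{2}}$, exactly as in the chain \eqref{16}. Since $\xi-\xi_m=2\pi b(\sigma-\sigma_m)$ with $\sigma_m=\frac{\xi_m-\tau}{2\pi b}$, and the Jacobian contributes a factor $2\pi|b|$ (the absolute value coming from the possible sign of $b$), this gives
\[
(\mu_{p})_{\left|O_f^J(\xi)\right|^{2}}=(2\pi|b|)^{p}\int_{\mathbb{R}}\left|\sigma-\sigma_m\right|^{p}\left|\hat{g}(\sigma)\right|^{2}\mathrm{d}\sigma=(2\pi|b|)^{p}\,(\mu_{p})_{\left|\hat{g}(\sigma)\right|^{2}},
\]
with $(\mu_{p})_{\left|\hat{g}(\sigma)\right|^{2}}=\int_{\mathbb{R}}\left|\sigma-\sigma_m\right|^{p}\left|\hat{g}(\sigma)\right|^{2}\mathrm{d}\sigma$. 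Next I would apply \eqref{30} to the $L^{2}$ function $g$ (this is where $p\ge2$ is used):
\[
\sqrt[p]{(\mu_{p})_{\left|g(t)\right|^{2}}}\,\sqrt[p]{(\mu_{p})_{\left|\hat{g}(\sigma)\right|^{2}}}\ \ge\ \frac{1}{4\pi}\sqrt[p]{E_{\left|g(t)\right|^{2}}^{2}}.
\]
Substituting $(\mu_{p})_{\left|\hat{g}(\sigma)\right|^{2}}=(2\pi|b|)^{-p}(\mu_{p})_{\left|O_f^J(\xi)\right|^{2}}$, replacing the $g$-quantities by the equal $f$-quantities, and multiplying through by $2\pi|b|$ yields
\[
\sqrt[p]{(\mu_{p})_{\left|f(t)\right|^{2}}}\,\sqrt[p]{(\mu_{p})_{\left|O_f^J(\xi)\right|^{2}}}\ \ge\ \frac{2\pi|b|}{4\pi}\sqrt[p]{E_{\left|f(t)\right|^{2}}^{2}}=\frac{|b|}{2}\sqrt[p]{E_{\left|f(t)\right|^{2}}^{2}},
\]
which is precisely \eqref{55}.

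I do not anticipate a genuine obstacle; the only delicate points are bookkeeping. One should check that the hypotheses transfer: $f,O_f^J,(\xi-\xi_m)^{p}O_f^J\in L^{1}(\mathbb{R})\cap L^{2}(\mathbb{R})$ forces $g\in L^{2}(\mathbb{R})$, and finiteness of $(\mu_{p})_{\left|O_f^J(\xi)\right|^{2}}$ passes under the substitution to finiteness of $(\mu_{p})_{\left|\hat{g}(\sigma)\right|^{2}}$, which is all that \eqref{30} requires (if either moment is infinite, \eqref{55} is trivial). One must also carry the sign of $b$ through the change of variables so that the Jacobian supplies $2\pi|b|$ rather than $2\pi b$, and recall that $b\neq0$ in the regime of the OLCT kernel, so $|b|>0$ throughout. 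The whole argument is a direct analogue of the reduction already used for Theorem \ref{thm3.1}, with the even exponent $2p$ replaced by a general integer $p\ge2$ and the squared moments replaced by the absolute moments $|t-t_m|^{p}$ and $|\xi-\xi_m|^{p}$.
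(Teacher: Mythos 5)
Your argument is correct, but it follows a genuinely different route from the paper. The paper never touches the Fourier-domain inequality \eqref{30} in its proof of Theorem \ref{thm3.3}: instead it applies the H\"older inequality twice to show that the $p$-th absolute moments dominate the second moments, namely $(\mu_{p})_{\left|f(t)\right|^2}^{2/p}E_{\left|f(t)\right|^2}^{1-2/p}\ge(\mu_{2})_{\left|f(t)\right|^2}$ and likewise on the OLCT side, then invokes the Parseval property $E_{\left|f(t)\right|^2}=E_{\left|O_f^J(\xi)\right|^2}$ and the second-order OLCT Heisenberg inequality \eqref{86} (a corollary of Theorem \ref{thm3.2}) to close the argument. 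You instead push the full $p$-th order FT-domain result \eqref{30} through the chirp-times-dilated-Fourier factorization of the OLCT kernel, the same mechanism as in the proof of Theorem \ref{thm3.1}. Your bookkeeping is right: the modulus of the kernel prefactor contributes $\frac{1}{2\pi|b|}$, the substitution $\sigma=\frac{\xi-\tau}{2\pi b}$ turns $|\xi-\xi_m|^p$ into $(2\pi|b|)^p|\sigma-\sigma_m|^p$, the Jacobian supplies $2\pi|b|$, and $|g|=|f|$ transfers the time-side quantities, so the constants combine to $\frac{2\pi|b|}{4\pi}=\frac{|b|}{2}$ exactly as required. The trade-off is that your proof is shorter and makes the reduction to the classical FT result completely explicit, but it uses \eqref{30} as a black box and needs care with the sign of $b$ in the change of variables; the paper's proof is self-contained within the OLCT framework (it reuses its own Corollary for $p=2$) and exposes, via H\"older, exactly how the general case $p\ge2$ reduces to the second-moment case, which also explains where the equality conditions come from.
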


\begin{proof}
	Firstly,
	\begin{equation}
	\label{83}
	\begin{aligned}
	(\mu_{p})_{\left| f(t) \right|^2}^{\frac{2}{p} }E_{\left| f(t) \right|^2}^{1-\frac{2}{p}} &= \left( \int_{\mathbb{R}} \left| t-t_m \right|^{p} \left| f(t) \right|^2 \mathrm{d}t \right)^{\frac{2}{p}} \left( \int_{\mathbb{R}} \left|f(t) \right|^2 \mathrm{d}t \right)^{1-\frac{2}{p}}\\
	&= \left[\int_{\mathbb{R}} \left( \left| t-t_m \right|^{2} \left| f(t) \right|^{\frac{4}{p}} \right) ^{\frac{p}{2}} \mathrm{d}t \right]^{\frac{2}{p}} \left[\int_{\mathbb{R}} \left( \left|f(t) \right|^{2\left(1-\frac{2}{p} \right)} \right)^{1/\left(1-\frac{2}{p} \right)} \mathrm{d}t \right]^{1-\frac{2}{p}}.
	\end{aligned}
	\end{equation}
	Then applying the Hölder inequality to \eqref{83}, we obtain
	\begin{equation*}
	\begin{aligned}
	(\mu_{p})_{\left| f(t) \right|^2}^{\frac{2}{p} }E_{\left| f(t) \right|^2}^{1-\frac{2}{p}} & \ge \int_{\mathbb{R}}  \left( (t-t_m) ^2 \left|f(t) \right|^{\frac{4}{p}} \right) \left(\left|f(t) \right|^{2\left(1-\frac{2}{p} \right) } \right)  \mathrm{d}t\\
	&= \int_{\mathbb{R}} (t-t_m) ^2 \left|f(t) \right|^2 \mathrm{d}t\\
	&= (\mu_{2})_{\left| f(t) \right|^2} \triangleq \phi_{\left|f(t) \right|^2}^2,
	\end{aligned}
	\end{equation*}
	so 
	\begin{equation}
	\label{61}
	(\mu_{p})_{\left| f(t) \right|^2}^{\frac{1}{p}} \ge  \phi_{\left|f(t) \right|^2} / E_{\left| f(t) \right|^2}^{\frac{1}{2}-\frac{1}{p}}.
	\end{equation}
	Equality holds in \eqref{61} if and only if
	\begin{equation*}
	\left| t-t_m \right|^{p} E_{\left| f(t) \right|^2} = (\mu_{p})_{\left| f(t) \right|^2}.
	\end{equation*}
	Similarly, 
	\begin{equation*}
	\begin{aligned}
	(\mu_{p})_{\left| O_f^J(\xi) \right|^2}^{\frac{2}{p} } E_{\left| O_f^J(\xi) \right|^2}^{1-\frac{2}{p}} &= \left( \int_{\mathbb{R}} \left| \xi-\xi_m \right|^{p} \left| O_f^J(\xi) \right|^2 \mathrm{d} \xi \right)^{\frac{2}{p}} \left( \int_{\mathbb{R}} \left| O_f^J (\xi) \right|^2 \mathrm{d}\xi \right)^{1-\frac{2}{p}} \\
	&\ge \int_{\mathbb{R}} (\xi-\xi_m) ^2 \left| O_f^J(\xi) \right|^2 \mathrm{d}\xi\\
	&= (\mu_{2})_{\left| O_f^J(\xi) \right|^2} \triangleq \phi_{\left| O_f^J(\xi) \right|^2}^2,
	\end{aligned}	
	\end{equation*}
	so 
	\begin{equation}
	\label{62}
	(\mu_{p})_{\left| O_f^J(\xi) \right|^2}^{\frac{1}{p}} \ge  \phi_{\left| O_f^J(\xi) \right|^2} / E_{\left| O_f^J(\xi) \right|^2}^{\frac{1}{2}-\frac{1}{p}}.
	\end{equation}
	Equality holds in \eqref{62} if and only if
	\begin{equation*}
	\left| \xi-\xi_m \right|^{p} E_{\left| O_f^J(\xi) \right|^2} = (\mu_{p})_{\left| O_f^J(\xi) \right|^2}.
	\end{equation*}
	By the Parseval property of the OLCT \cite{OLCTP}, we obtain
	\begin{equation}
	E_{\left| f(t) \right|^2} = E_{\left| O_f^J(\xi) \right|^2},
	\end{equation}
	then we find
	\begin{equation}
	\label{102}
	(\mu_{p})_{\left| f(t) \right|^2}^{\frac{1}{p}} (\mu_{p})_{\left| O_f^J(\xi) \right|^2}^{\frac{1}{p}} \ge \phi_{\left|f(t) \right|^2} \phi_{\left| O_f^J(\xi) \right|^2} / E_{\left| f(t) \right|^2}^{1-\frac{2}{p}}.
	\end{equation}
	From \eqref{86}, we have
	\begin{equation}
	\label{101}
	\phi_{\left|f(t) \right|^2}^2 \phi_{\left| O_f^J(\xi) \right|^2}^2 \ge \frac{b^2}{4} E_{\left| f(t) \right|^2} ^2.
	\end{equation}
	Substituting equation \eqref{101} into \eqref{102} completes the proof of Theorem \ref{thm3.3}.
\end{proof}

\subsection{The SHW uncertainty principle of the OLCT}
This section builds upon the $2p$ order HPW uncertainty principle in the OLCT domain and presents the SHW uncertainty principle, which provides a more precise lower bound.

\begin{thm}
	\label{thm3.4}
	Suppose that $f(t)$, $O_f^J(\xi)$, $(\xi-\xi_m)^p O_f^J(\xi)$ belong to $L^1(\mathbb{R}) \cap L^2(\mathbb{R})$. For any fixed but arbitrary $p \in \mathbb{N}$, the SHW uncertainty principle of the OLCT holds as follows
	\begin{equation}
	\label{31}
	\sqrt[2p]{(\mu_{2p})_{\omega, \left| f(t) \right|^2}} \sqrt[2p]{(\mu_{2p})_{ \left| O_f^J(\xi) \right|^2}} \ge \frac{|b|}{\sqrt[p]{2}} \sqrt[p]{\left| E_{p,f}^* \right|}.
	\end{equation}
	The equality in \eqref{31} holds when 
	\begin{equation}
	A = ||u(t)||x_0-||v(t)||y_0,
	\end{equation}
	where $u(t) = \omega(t) (t-t_m)^p g_{\beta}(t)$, $v(t) = g_{\beta}^{(p)} (t)$ and $x_0 = \int_{\mathbb{R}} \left|v(t) \right| \left| h(t) \right| \mathrm{d}t$, $y_0 = \int_{\mathbb{R}} \left|u(t) \right| \left| h(t) \right| \mathrm{d}t$, $\left\|h(t)\right\|^2 = \int_{\mathbb{R}} \left|h(t) \right|^2 \mathrm{d}t =1$.
\end{thm}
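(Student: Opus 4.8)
The plan is to run the proof of Theorem~\ref{thm3.2} essentially unchanged, but to replace its single use of the Cauchy--Schwarz inequality by a sharpened version that retains one extra nonnegative term. First I would rewrite the two factors on the left of \eqref{31} in terms of $u(t)=\omega(t)(t-t_m)^{p}g_{\beta}(t)$ and $v(t)=g_{\beta}^{(p)}(t)$ as in the statement. By Theorem~\ref{thm3.1} together with \eqref{53}, $(\mu_{2p})_{\left|O_f^J(\xi)\right|^2}=b^{2p}\int_{\mathbb{R}}\left|g_{\beta}^{(p)}(t)\right|^{2}\mathrm{d}t=b^{2p}\left\|v\right\|^{2}$. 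Since $g_{\beta}(t)=\mathrm{e}^{-\mathrm{j}\beta t}\mathrm{e}^{\mathrm{j}\frac{a}{2b}t^{2}}f(t)$ with both exponential factors of modulus one, $\left|g_{\beta}(t)\right|=\left|f(t)\right|$, so by \eqref{51} also $(\mu_{2p})_{\omega,\left|f(t)\right|^2}=\int_{\mathbb{R}}\omega^{2}(t)(t-t_m)^{2p}\left|g_{\beta}(t)\right|^{2}\mathrm{d}t=\left\|u\right\|^{2}$. Hence the left side of \eqref{31} equals $\sqrt[2p]{b^{2p}\left\|u\right\|^{2}\left\|v\right\|^{2}}$, and, recalling from \eqref{54} that $\left|E_{p,f}^{*}\right|=\sqrt{E_{p,f}^{2}+4A^{2}}$, the whole theorem reduces to the single estimate $\left\|u\right\|^{2}\left\|v\right\|^{2}\ge\tfrac14 E_{p,f}^{2}+A^{2}$.

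Next I would prove the sharpened Cauchy--Schwarz inequality: for any $h$ with $\left\|h\right\|=1$,
\[
\left\|u\right\|^{2}\left\|v\right\|^{2}\ \ge\ \left(\mathrm{Re}\int_{\mathbb{R}}u(t)\overline{v(t)}\,\mathrm{d}t\right)^{2}+A^{2},\qquad A=\left\|u\right\|\,x_{0}-\left\|v\right\|\,y_{0},
\]
with $x_{0}=\int_{\mathbb{R}}\left|v(t)\right|\left|h(t)\right|\mathrm{d}t$ and $y_{0}=\int_{\mathbb{R}}\left|u(t)\right|\left|h(t)\right|\mathrm{d}t$ as in the statement. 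This splits into the trivial bound $\left(\mathrm{Re}\int u\overline{v}\right)^{2}\le\left(\int\left|u\right|\left|v\right|\right)^{2}$ and the inequality $\left\|u\right\|^{2}\left\|v\right\|^{2}-\left(\int\left|u\right|\left|v\right|\right)^{2}\ge A^{2}$; for the latter, putting $a=\int\left|u\right|\left|h\right|/\left\|u\right\|$, $b=\int\left|v\right|\left|h\right|/\left\|v\right\|$ and $c=\int\left|u\right|\left|v\right|/(\left\|u\right\|\left\|v\right\|)$, all in $[0,1]$, the triangle inequality for the angles that the three unit functions $\left|u\right|/\left\|u\right\|$, $\left|v\right|/\left\|v\right\|$, $\left|h\right|$ make with one another gives $c\le ab+\sqrt{(1-a^{2})(1-b^{2})}$, and together with $ab+\sqrt{(1-a^{2})(1-b^{2})}\le1$ this yields $1-c^{2}\ge(b-a)^{2}$, which is exactly the claim after multiplying through by $\left\|u\right\|^{2}\left\|v\right\|^{2}$.

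It then remains to identify the main term, and here I would reuse the computation already carried out in the proof of Theorem~\ref{thm3.2}. Writing $\omega_{p}$ as in \eqref{91}, one has $2\,\mathrm{Re}\int_{\mathbb{R}}u\overline{v}=\int_{\mathbb{R}}\omega_{p}(t)\bigl(g_{\beta}(t)\overline{g_{\beta}^{(p)}(t)}+g_{\beta}^{(p)}(t)\overline{g_{\beta}(t)}\bigr)\mathrm{d}t$; applying Identity~\ref{id1} with $k=p$ to $g_{\beta}$, transferring the derivatives off $\left|g_{\beta}^{(l)}\right|^{2}$ onto $\omega_{p}$ by \eqref{12} (the boundary terms of the form \eqref{13} vanishing under the $L^{1}\cap L^{2}$ hypotheses), and expanding each $\left|g_{\beta}^{(l)}(t)\right|^{2}$ through the Lagrange-type identity \eqref{9} with coefficients \eqref{90}, the resulting sums of coefficients $D_{q}$, $B_{qn}$, $C_{qiz}$ reassemble into \eqref{75}, so that $\left(\mathrm{Re}\int u\overline{v}\right)^{2}=\tfrac14 E_{p,f}^{2}$. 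Substituting into the displayed inequality gives $\left\|u\right\|^{2}\left\|v\right\|^{2}\ge\tfrac14 E_{p,f}^{2}+A^{2}=\tfrac14\left|E_{p,f}^{*}\right|^{2}$; multiplying by $b^{2p}$ and taking $2p$-th roots produces \eqref{31}. The equality clause is then read off these inequalities: equality forces $u$ and $v$ to share a common phase (so that $\mathrm{Re}\int u\overline{v}=\pm\int\left|u\right|\left|v\right|$) and the part of $\left|v\right|$ orthogonal to $\left|u\right|$ to be a multiple of $\left|h\right|$, which is the configuration encoded by $A=\left\|u\right\|x_{0}-\left\|v\right\|y_{0}$ with $u,v$ as specified. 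I expect the only genuine labor to be the bookkeeping in this last paragraph --- carrying the extra chirp $\mathrm{e}^{\mathrm{j}at^{2}/(2b)}$ hidden inside $g_{\beta}$ through the identity chain of Theorem~\ref{thm3.2} and checking that every boundary term vanishes --- whereas the sharpened Cauchy--Schwarz step is short and self-contained.
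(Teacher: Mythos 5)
Your proposal is correct and follows essentially the same route as the paper: reduce the left side to $b^{2p}\left\|u\right\|^{2}\left\|v\right\|^{2}$ via Theorem~\ref{thm3.1}, extract the extra $A^{2}$ from a sharpened Cauchy--Schwarz inequality involving the auxiliary unit vector $h$, and identify the main term with $\tfrac14E_{p,f}^{2}$ by reusing the identity chain \eqref{23}--\eqref{80} from the proof of Theorem~\ref{thm3.2}. The only cosmetic difference is that the paper obtains $\left\|u\right\|^{2}\left\|v\right\|^{2}-(\left|u\right|,\left|v\right|)^{2}\ge A^{2}$ from the nonnegativity of the $3\times 3$ Gram determinant of $\left|u\right|,\left|v\right|,h$, whereas you derive the equivalent bound via the triangle inequality for angles.
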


\begin{proof}
	In fact, from \eqref{14} in Theorem \ref{thm3.1}, we obtain
	\begin{align}
	\label{32}
	M_p^* =& M_p -b^{2p} A^2 \nonumber\\
	=&(\mu_{2p})_{\omega,\left|f(t) \right|^2}(\mu_{2p})_{\left| O_f^J(\xi) \right|^2} - b^{2p} A^2 \nonumber\\
	=&\left( \int_{\mathbb{R}} \omega^2(t) (t-t_m)^{2p} \left|f(t) \right|^2 \mathrm{d}t \right) \cdot \left( \int_{\mathbb{R}} (\xi-\xi_m)^{2p} \left|O_f^J(\xi) \right|^2 \mathrm{d}\xi \right) -b^{2p} A^2 \nonumber\\
	=&b^{2p} \left[ \left( \int_{\mathbb{R}} \omega^2(t) (t-t_m)^{2p} \left| g_{\beta}(t) \right|^2 \mathrm{d}t \right) \left( \int_{\mathbb{R}} \left| g_{\beta}^{(p)} (t) \right| ^2 \mathrm{d}t \right) -A^2 \right]  \nonumber\\
	=&b^{2p} \left( \left\|u\right\|^2 \left\|v\right\|^2 -A^2 \right).
	\end{align} 
	From the positive definiteness of the following Gram determinant, we obtain
	\begin{align*}
	0 \leq &\begin{pmatrix}
	\left\|u\right\|^2&(\left|u\right|,\left|v\right|)&y_0\\
	(\left|u\right|,\left|v\right|)&\left\|v\right\|^2&x_0\\
	y_0&x_0&1\\
	\end{pmatrix} \nonumber \\
	=&\left\|u\right\|^2 \left\|v\right\|^2-(\left|u\right|,\left|v\right|)^2-\left[\left\|u\right\|^2 x_0^2 -2(\left|u\right|,\left|v\right|)x_0y_0+\left\|v\right\|^2y_0^2 \right] \nonumber \\
	\leq &\left\|u\right\|^2 \left\|v\right\|^2-(\left|u\right|,\left|v\right|)^2-A^2,
	\end{align*}
	then we find
	\begin{align}
	\label{34}
	M_p^* \ge& b^{2p}(\left|u \right|,\left|v \right|)^2 \nonumber \\
	=&b^{2p} \left( \int_{\mathbb{R}} \left|u \right| \left|v \right| \mathrm{d}t \right)^2  \nonumber\\
	=&b^{2p}  \left( \int_{\mathbb{R}} \left|\omega_p(t) g_{\beta}(t) g_{\beta}^{(p)} (t) \right| \mathrm{d}t \right)^2.
	\end{align}
	The right-hand sides of equations \eqref{23} and \eqref{34} are exactly the same, so we obtain
	\begin{equation}
	\label{35}
	M_p^* \ge \frac{b^{2p}}{4} E_{p,f}^2.
	\end{equation}
	Substituting equation \eqref{35} into \eqref{32}, we have
	\begin{align}
	\label{36}
	M_p \ge& \frac{b^{2p}}{4} E_{p,f}^2+b^{2p} A^2 \nonumber\\
	=&\frac{b^{2p}}{4} \left( E_{p,f}^2+4A^2 \right).
	\end{align}
	So the $2p$ order SHW uncertainty principle of the OLCT is as follows
	\begin{equation*}
	\sqrt[2p]{M_p} \ge \frac{|b|}{\sqrt[p]{2}} \sqrt[p]{\left| E_{p,f}^{*} \right|},
	\end{equation*}
	where $\left| E_{p,f}^{*} \right| = \sqrt{E_{p,f}^2+4A^2}$.
	Hence, Theorem \ref{thm3.4} is proved.
\end{proof}

Theorem \ref{thm3.2} and Theorem \ref{thm3.4} provide two lower bounds for the HPW uncertainty principle of the OLCT. From a mathematical standpoint, a larger value on the right side of the inequality indicates a more precise and reliable estimate of the left side. Therefore, it is crucial to study the sharpened version of the uncertainty principle. From a practical signal processing perspective, a smaller lower bound implies that higher time-frequency concentration is theoretically permissible in time-frequency analysis, thereby characterizing the ideal theoretical limit. However, it remains necessary to derive sharpened version of the uncertainty principle in the OLCT domain with larger lower bounds. The reason is that a smaller theoretical bound merely reflects the potential limit under ideal conditions, whereas actual signals often fail to attain this level of concentration. In contrast, sharpened lower bounds, although numerically larger, provide more precise estimates that are closer to the true achievable limits. 

\section{Simulations}
\label{sec4}
This section validates the results of this paper through numerical experiments. Taking Theorem \ref{thm3.4} as an example, the superiority of the numerical results is demonstrated and the factors influencing signal concentration in the OLCT domain are analyzed.
\begin{example}
	\label{ex1}
	Consider a signal $f(t)=\mathrm{e}^{-\frac{r}{2}t^2} \mathrm{e}^{-\mathrm{j}\frac{a}{2b}t^2}$ and a real weight function $\omega(t) = \mathrm{e}^{-rt}$ with $r > 0$.
	
	We let $Q_1 = \sqrt[2p]{(\mu_{2p})_{\omega, \left| f(t) \right|^2}} \sqrt[2p]{(\mu_{2p})_{ \left| O_f^J(\xi) \right|^2}}$ and $Q_2 = \frac{b}{\sqrt[p]{2}} \sqrt[p]{\left| E_{p,f}^* \right|}$. For different values of $A$, we compute $Q_1$ and $Q_2$ respectively and then plot the corresponding graphs in Figure \ref{fig1}.
	\vspace{-1.5em}
	\begin{figure}[H]
		\centering
		\subfloat[\shortstack{%
			$A = \|u\| x_0 - \|v\| y_0$,\\
			$Q_1 = Q_2$
		}]{
			\includegraphics[width=0.34\linewidth]{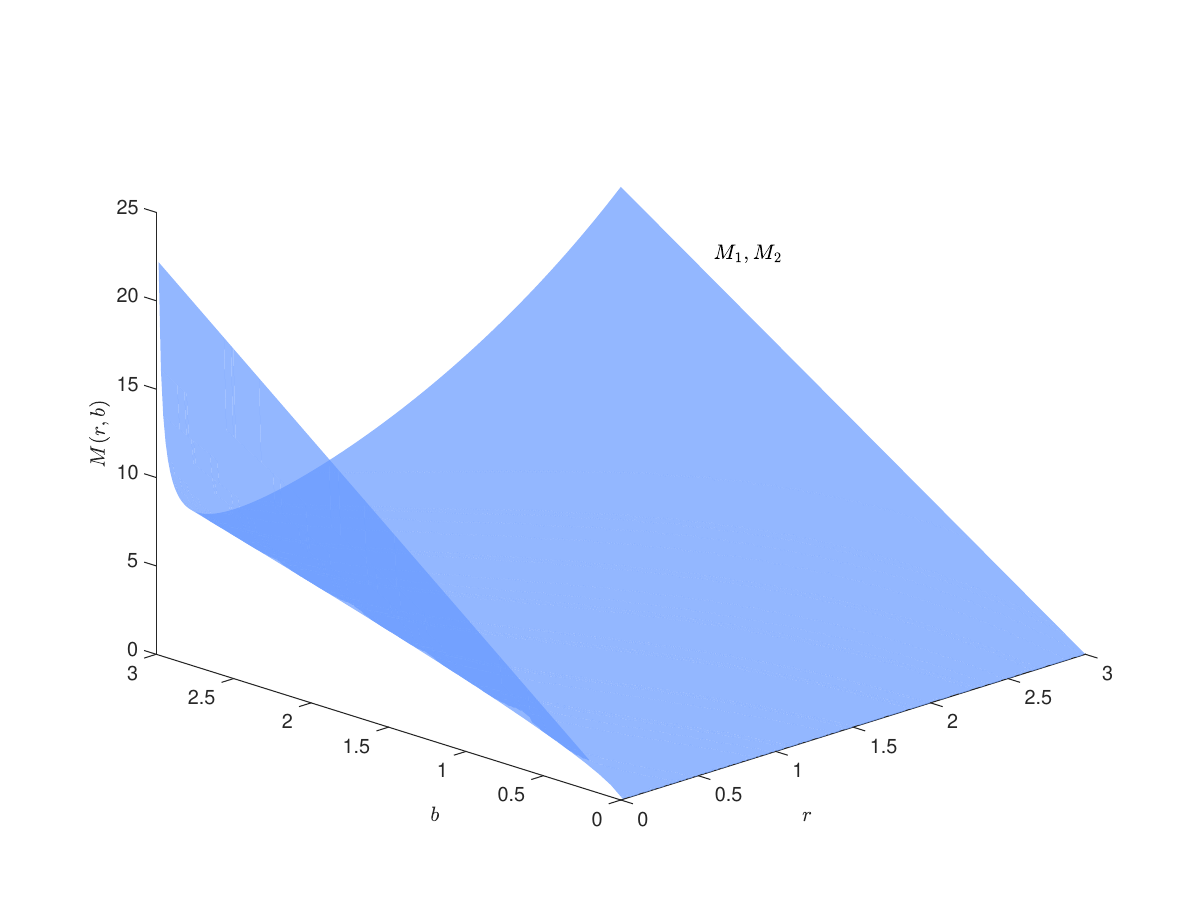}
		}
		\hspace{-1.5em}
		\subfloat[\shortstack{%
			$A=0$,\\
			$Q_1 > Q_2$
		}]{
			\includegraphics[width=0.33\linewidth]{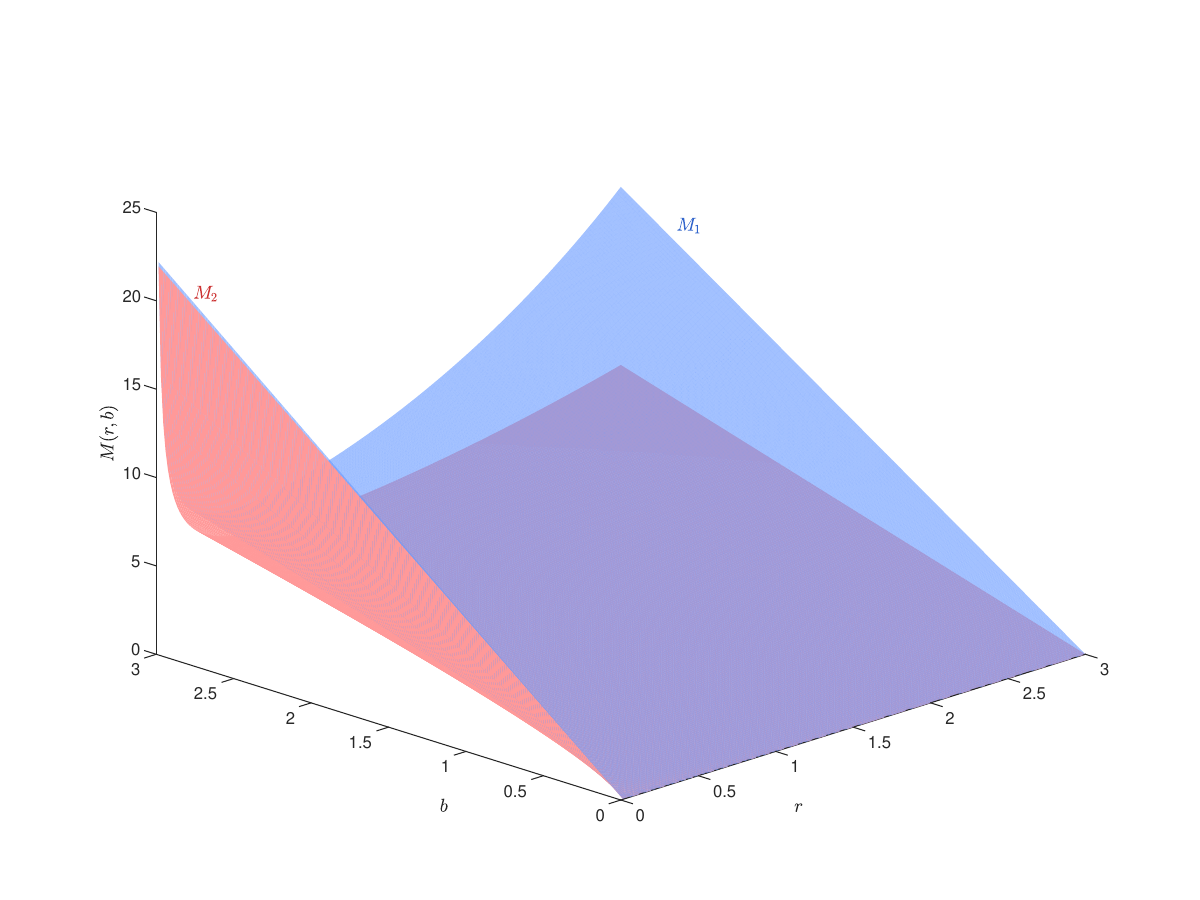}
		}
		\hspace{-1.5em}
		\subfloat[\shortstack{%
			$A=1$,\\
			$Q_1 > Q_2$
		}]{
			\includegraphics[width=0.33\linewidth]{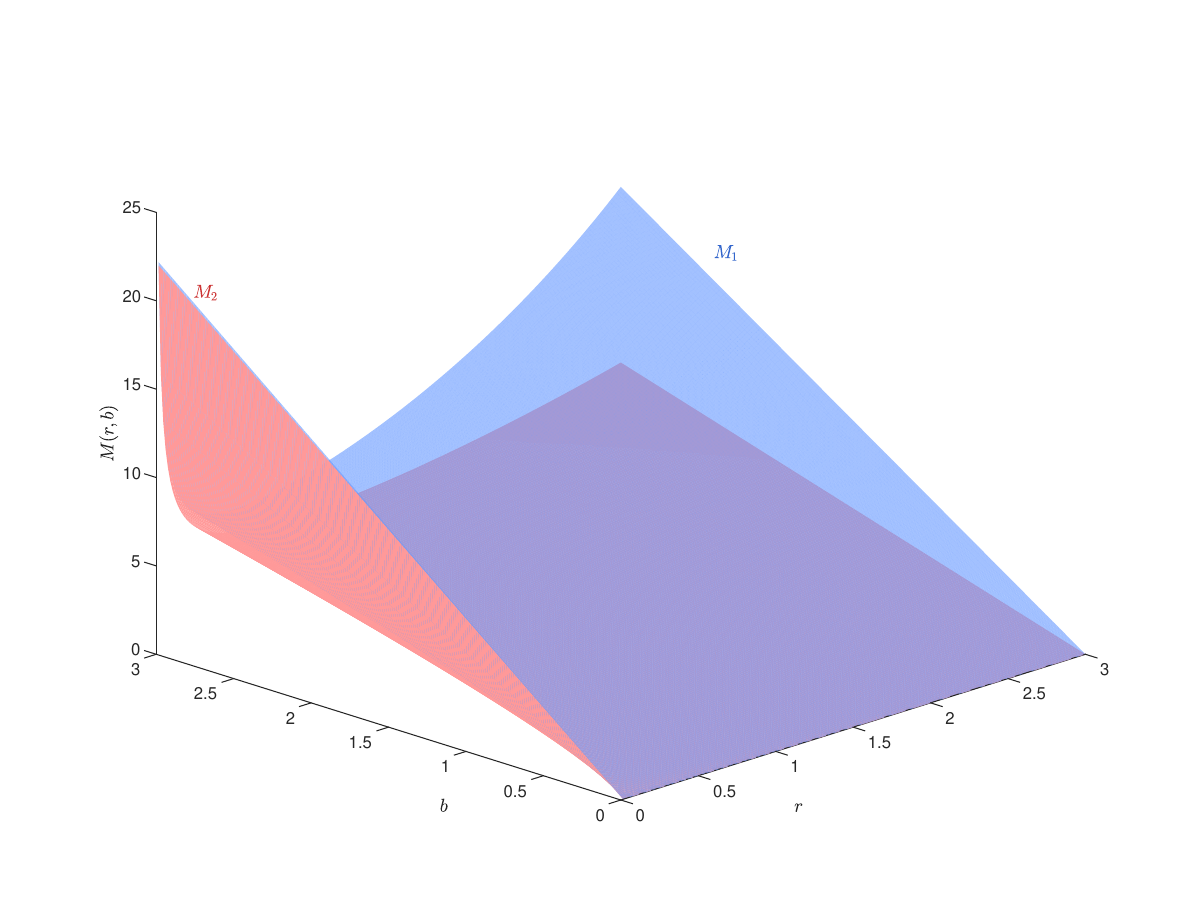}
		}
		\caption{Numerical simulations for $p=1$, $t_m=0$ and $\tau=\xi_m=0$.}
		\label{fig1}
	\end{figure}
\end{example}
Figure \ref{fig1} illustrates that when $A = ||u||x_0-||v||y_0$, we obtain $Q_1 =Q_2$. In this case, the product of the time resolution and the frequency resolution in the OLCT domain can reach its minimum value. In other words, the sharpened  HPW uncertainty principle can achieve the theoretical lower bound.

For different values of $r$, we compute $Q_1$ and $Q_2$ with $b=1$ and $A=1$, then plot the corresponding graphs in Figure \ref{fig1.1}.
\begin{figure}[H]
	\centering
	\setlength{\abovecaptionskip}{0.5pt}
	\includegraphics[width=0.76\linewidth]{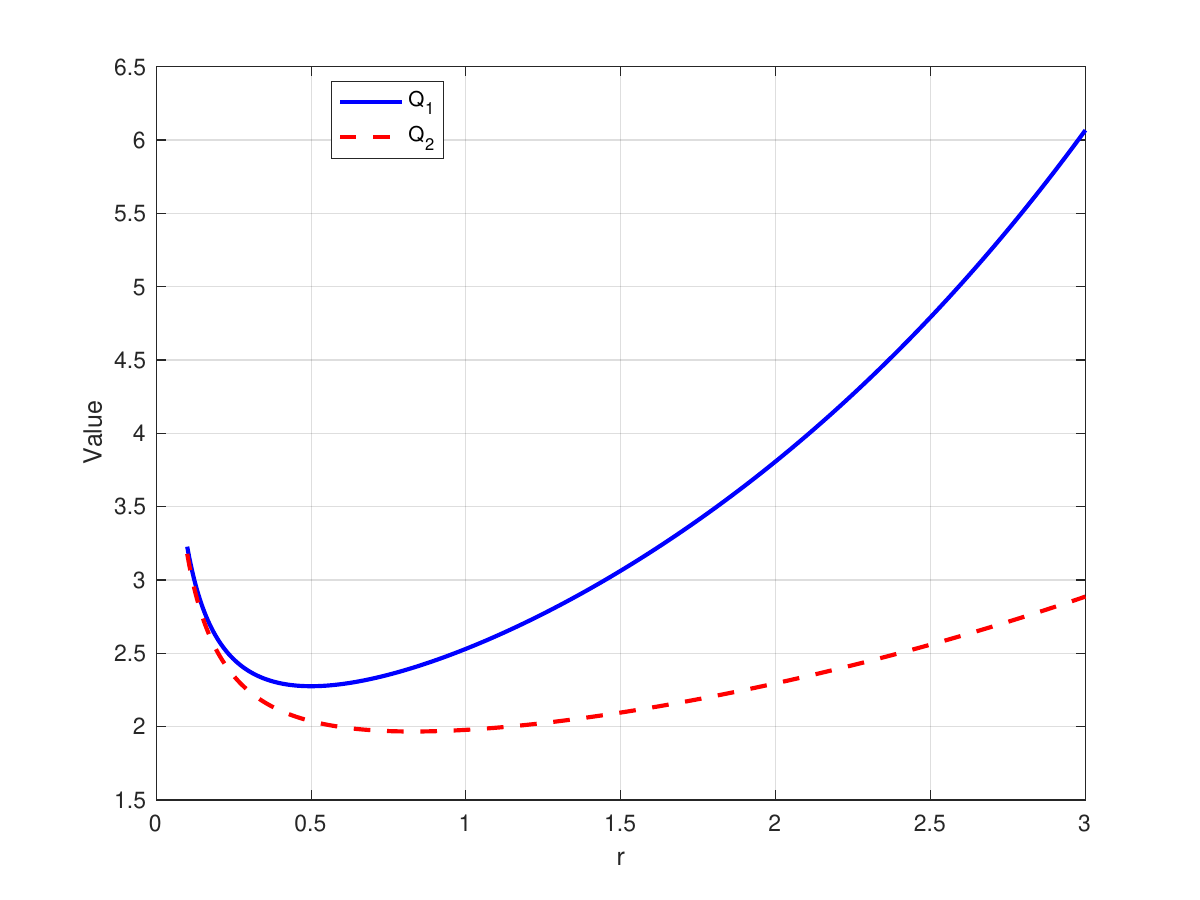}  
	\caption{Numerical simulations for $p=1$, $t_m=0$ and $\tau=\xi_m=0$.}
	\label{fig1.1}
\end{figure}
From figure \ref{fig1.1} it can be observed that $Q_1>Q_2$ holds for all $r>0$, thereby confirming the validity of Theorem \ref{thm3.4}.

Furthermore, we compare the lower bounds provided by different $2p$ order uncertainty principles in Table \ref{tab1}. The signal in Example \ref{ex1} is identical to that in \cite{LCTUN}. Hence, we proceed to compare our results with those presented in that reference.
\begin{table}[H]
	\centering
	\caption{Comparison of the lower bounds of different uncertainty principles.}
	\vspace{0.15cm}
	\label{tab1}
	\begin{tabular}{p{6cm}|p{6cm}}
		\hline \hline
		\textbf{Uncertainty principles} & \textbf{The lower bound in Example \ref{ex1}} \\
		\hline
		
		The $2p$ order uncertainty principle derived in this paper
		&
		$\frac{b^2}{2} \pi \mathrm{e}^r \left( \frac{1}{2r}+1 \right)$ \\
		\hline
		
		The $2p$ order uncertainty principle derived in \cite{LCTUN}
		&
		$\frac{b^2}{4} \frac{\pi}{r} \mathrm{e}^{\frac{r}{2}} \left( 1+\frac{r}{2} \right)^2$ \\
		\hline
	\end{tabular}
\end{table}

Compared with the result in \cite{LCTUN}, it can be concluded that for $r>0$, the inequality 
\begin{equation}
\label{84}
\frac{b^2}{2} \pi \mathrm{e}^r \left( \frac{1}{2r}+1 \right) > 	\frac{b^2}{4} \frac{\pi}{r} \mathrm{e}^{\frac{r}{2}} \left( 1+\frac{r}{2} \right)^2, 
\end{equation}
holds. This indicates that the SHW uncertainty principle provides a tighter lower bound. Then we prove the \eqref{84} holds universally. The difference between the two sides of the inequality \eqref{84} can be explicitly expressed as
\begin{equation*}
\frac{b^2}{2} \pi \mathrm{e}^r \left( \frac{1}{2r}+1 \right) - \frac{b^2}{4} \frac{\pi}{r} \mathrm{e}^{\frac{r}{2}} \left( 1+\frac{r}{2} \right)^2 = \frac{b^2}{2} \pi \mathrm{e}^{\frac{r}{2}} \left[ \mathrm{e}^{\frac{r}{2}}\left(\frac{1}{2r}+1 \right)-\frac{1}{2r}\left(1+\frac{r}{2} \right)^2 \right].
\end{equation*}
Let $G(r) = \mathrm{e}^{\frac{r}{2}}\left(\frac{1}{2r}+1 \right)-\frac{1}{2r}\left(1+\frac{r}{2} \right)^2$ and its graph is shown in Figure \ref{fig2}.
\begin{figure}[H]
	\centering
	\includegraphics[width=0.76\linewidth]{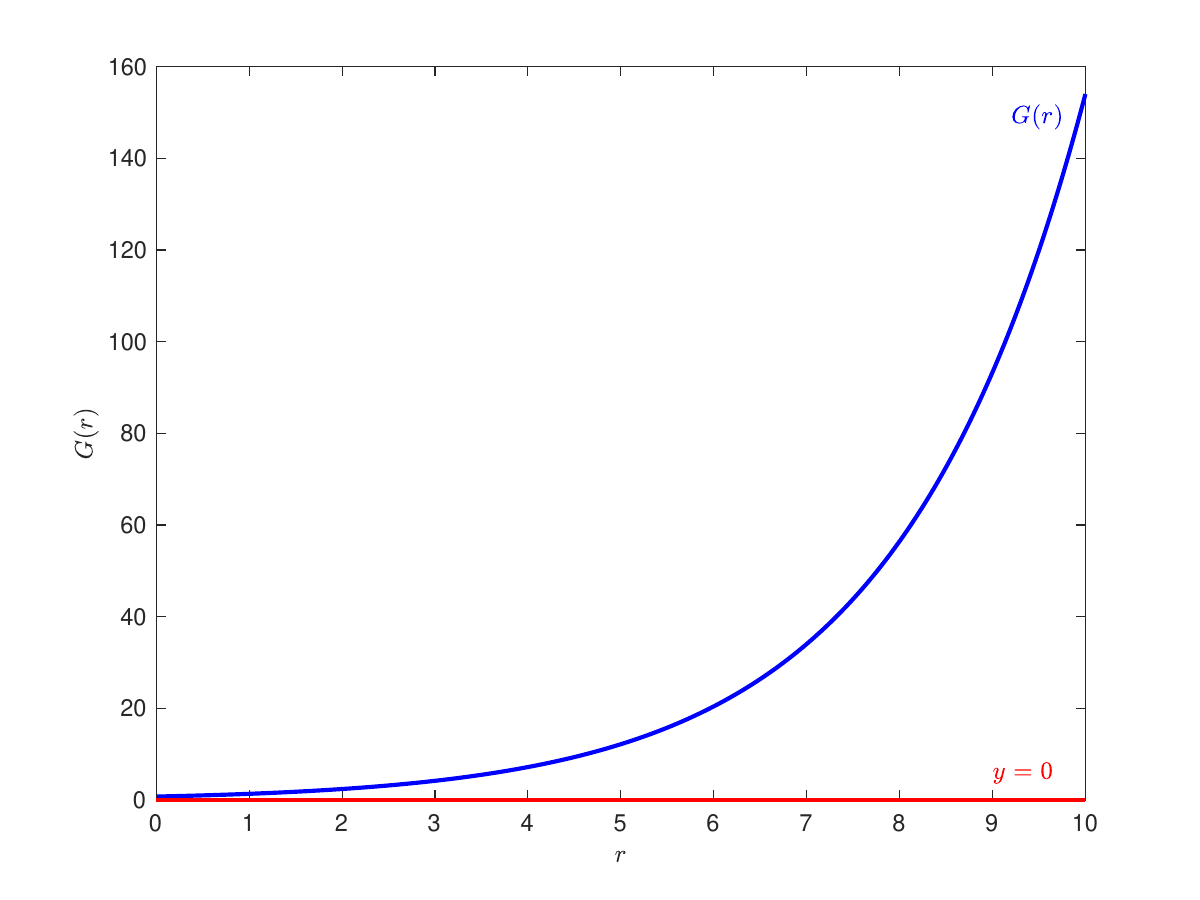}  
	\caption{The plot of $G(r)$.}
	\label{fig2}
\end{figure}
Figure \ref{fig2} clearly shows that $G(r)>0$ for all $r>0$, confirming the universal validity of inequality \eqref{84}.

\begin{example}
	Consider the signal in Example \ref{ex1} with $r=2$, then $f(t)=\mathrm{e}^{-t^2} \mathrm{e}^{-\mathrm{j}6t^2}$ and $\omega(t) = \mathrm{e}^{-2t}$. We set the parameters as $a=0.6$, $b=0.05$, $c=0.5$, $d=0.4$, $\tau=0$, $\eta=1$, $t_m=0$, $\xi_m=0$.
\end{example}
Compared with the FT, the OLCT allows for better energy concentration of signals through appropriate parameter selection. The following figure illustrates the energy distribution of the original signal, the weighted signal, the signal in the FT domain, and the signal in the OLCT domain.
\begin{figure}[H]
	\centering
	\subfloat[Energy density of $f(t)$]{ 
		\includegraphics[width=2.7 in]{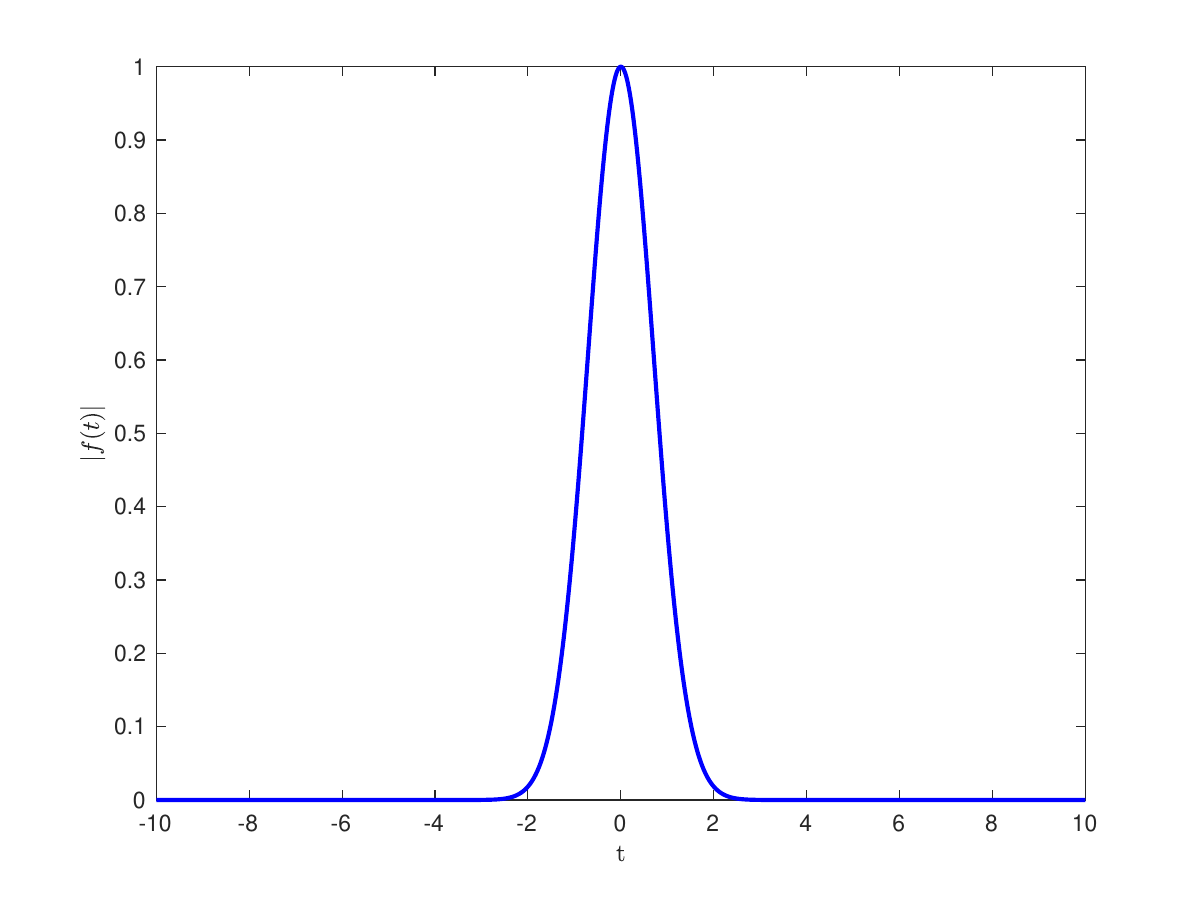}}
	\subfloat[Energy density of weighted $f(t)$]{
		\includegraphics[width=2.7 in]{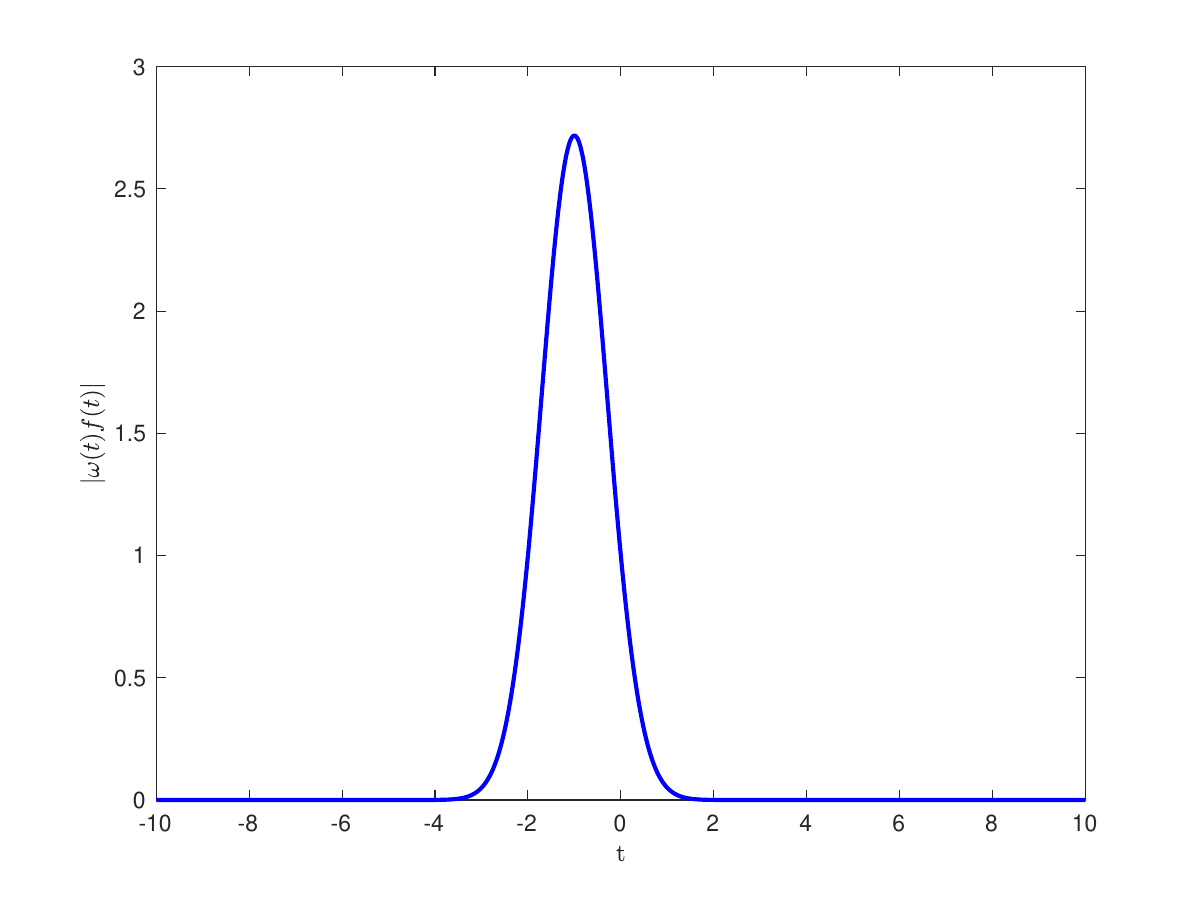}}
	\hfill
	\subfloat[Energy density of $\hat{f}(\xi)$]{
		\includegraphics[width=2.7 in]{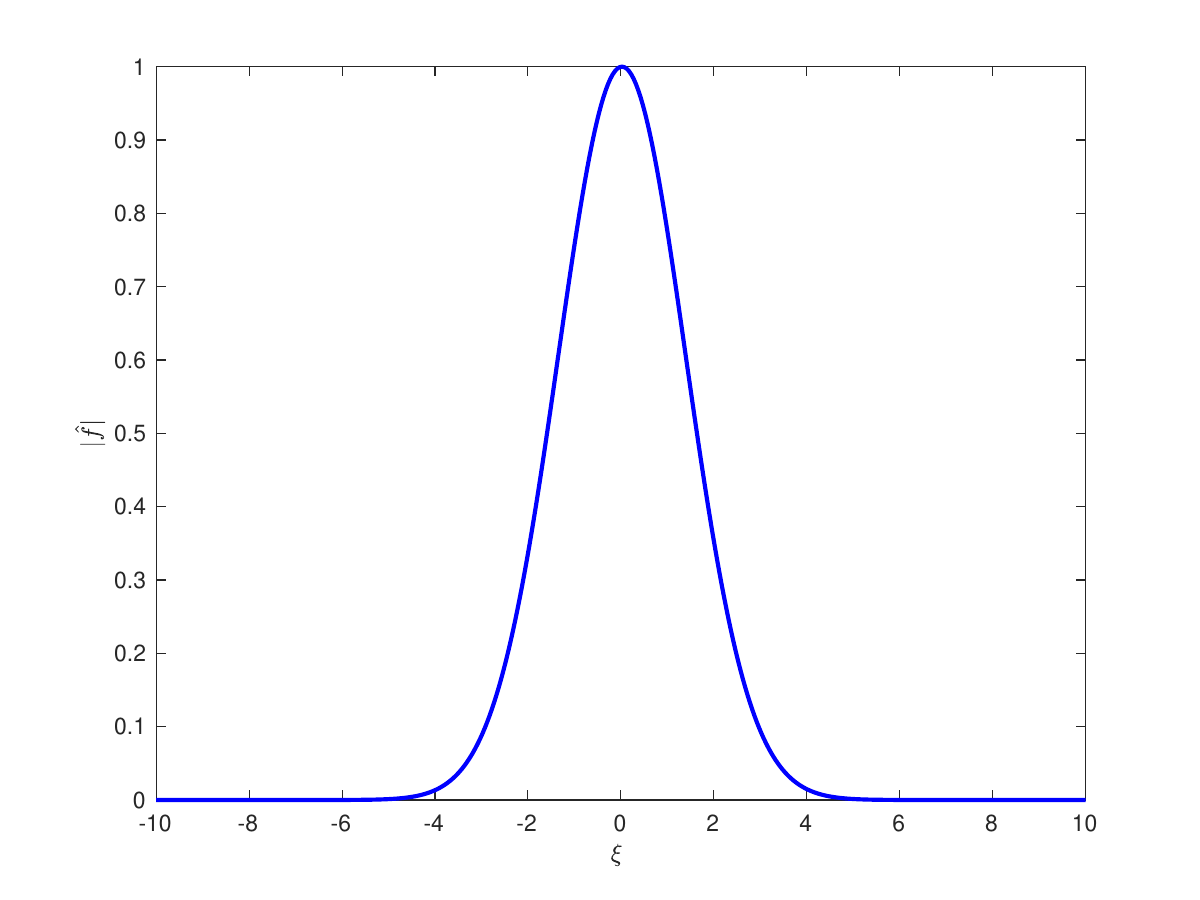}}
	\subfloat[Energy density of $O_J(\xi)$]{
		\includegraphics[width=2.7 in]{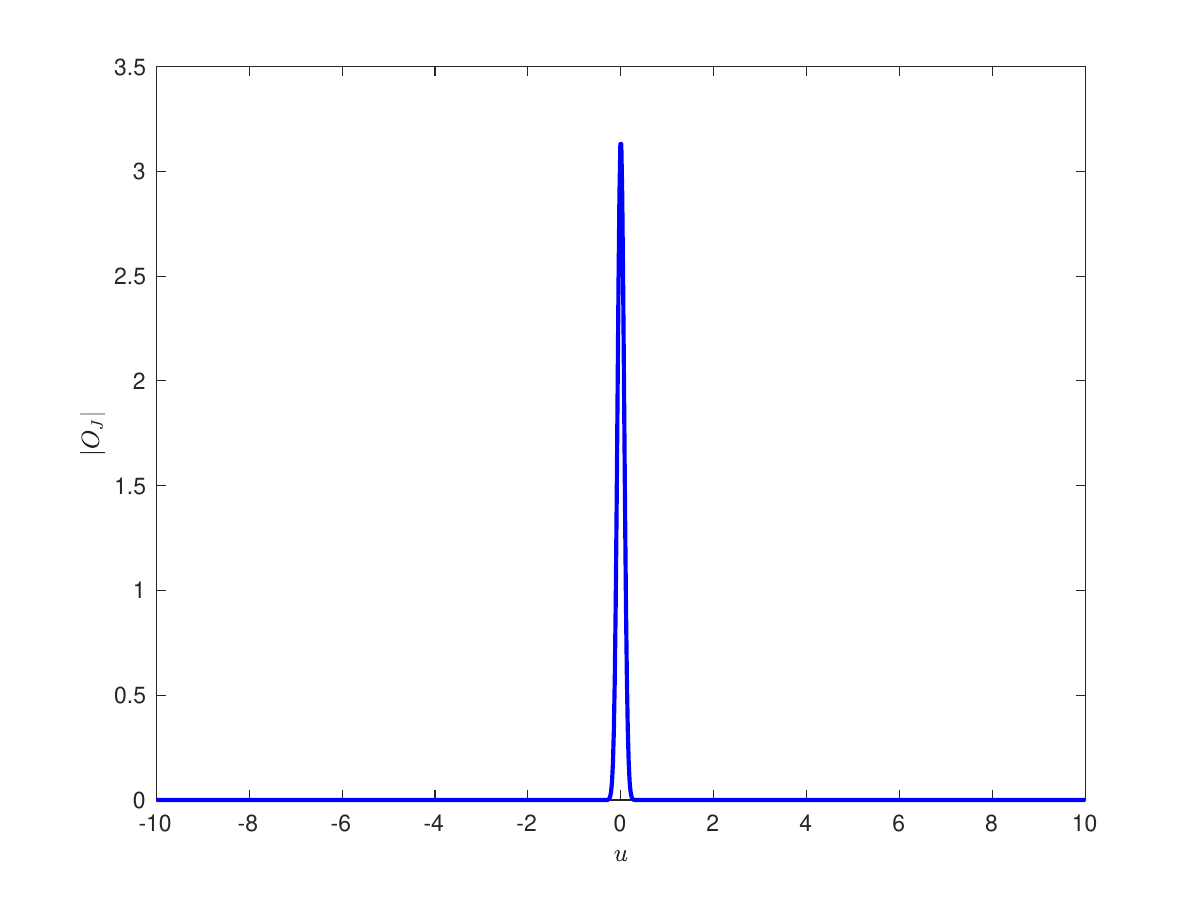}}
	\caption{The energy of a signal that can achieve the lower bound.}
	\label{Fig.3}
\end{figure}

As shown in Figure \ref{Fig.3}, when appropriate parameters are chosen for the OLCT, the signal energy exhibits greater concentration in the OLCT domain than in the conventional Fourier domain. And it is easy to calculate that
\begin{equation*}
\sqrt{(\mu_{2})_{\omega, \left| f(t) \right|^2}} \sqrt{(\mu_{2})_{ \left| O_f^J(\xi) \right|^2}} =1.904493221525881,
\end{equation*}
\begin{equation*}
\frac{|b|}{2} \left| E_{p,f}^* \right|  =1.904493221525881.
\end{equation*}
The equality $\sqrt{(\mu_{2})_{\omega, \left| f(t) \right|^2}} \sqrt{(\mu_{2})_{\left| O_f^J(\xi) \right|^2}}=\frac{|b|}{2} \left| E_{p,f}^* \right|$ validates the conclusion of Theorem \ref{thm3.4} for the case when $p=1$.

\begin{example}
	Consider signals in Example \ref{ex1} with $r=2$ and $r=10$, respectively. Then the signals are given by $f(t)=\mathrm{e}^{-t^2} \mathrm{e}^{-\mathrm{j}6t^2}$ and $f(t)=\mathrm{e}^{-5t^2} \mathrm{e}^{-\mathrm{j}6t^2}$. And the real weight functions are given by $\omega(t) = \mathrm{e}^{-2t}$ and $\omega(t) = \mathrm{e}^{-10t}$. Two settings of the parameters are considered: one with $a=0.6$, $b=0.05$ and the other with $a=6$, $b=0.5$, while keeping $c=0.5$, $d=0.4$, $\tau=0$, $\eta=1$ fixed.
\end{example}
To illustrate the influence of the weight function $\omega(t)$ and the OLCT parameters on the energy concentration of the signal, we present the energy distributions under different weight functions and various parameter settings.
\begin{figure}[t]
	\centering
	\subfloat[$\omega(t)=\mathrm{e}^{-2t}$]{  
		\includegraphics[width=2.7 in]{ex2w.pdf}}
	\subfloat[$\omega(t)=\mathrm{e}^{-10t}$]{
		\includegraphics[width=2.7 in]{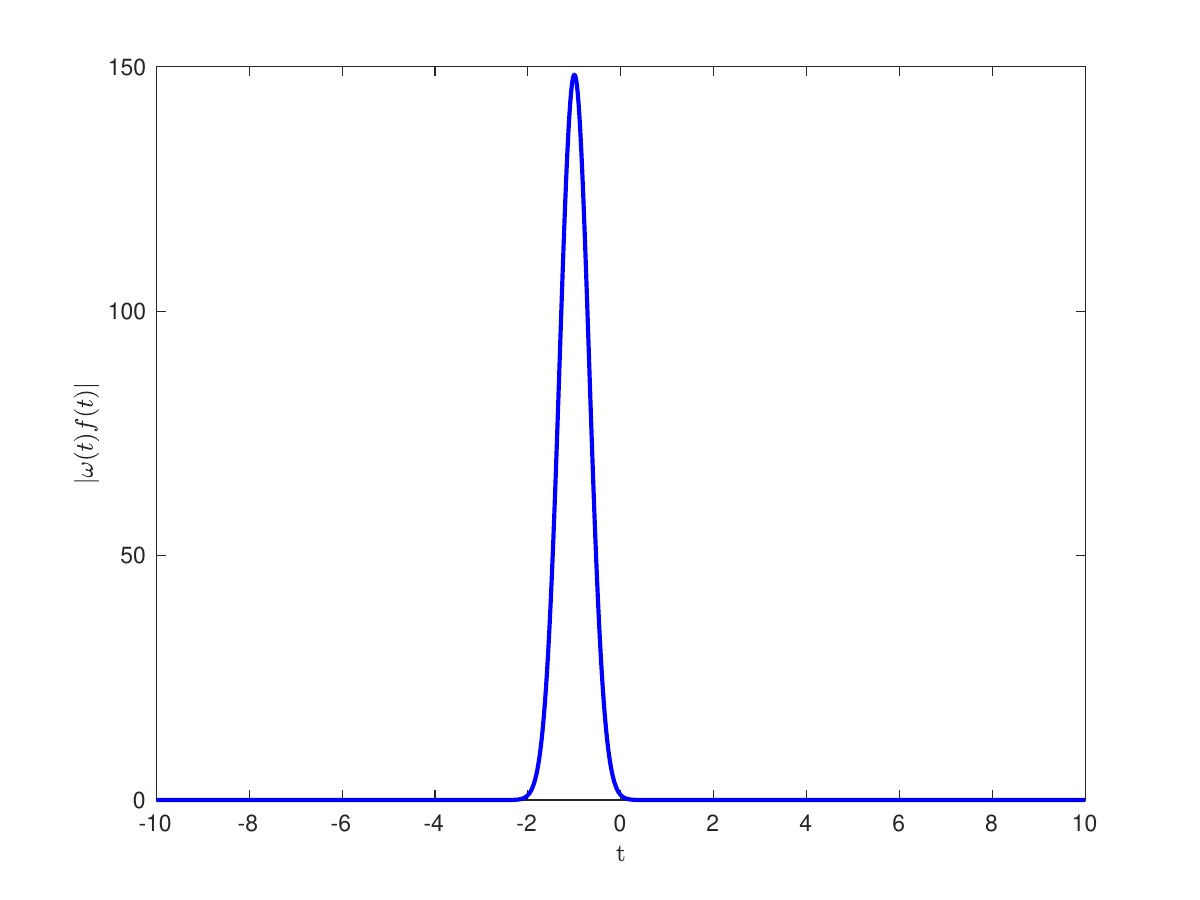}} 
	\vspace{1ex} 
	\subfloat[$a=0.6$, $b=0.05$]{
		\includegraphics[width=2.7 in]{ex2olct.pdf}}
	\subfloat[$a=6$, $b=0.5$]{
		\includegraphics[width=2.7 in]{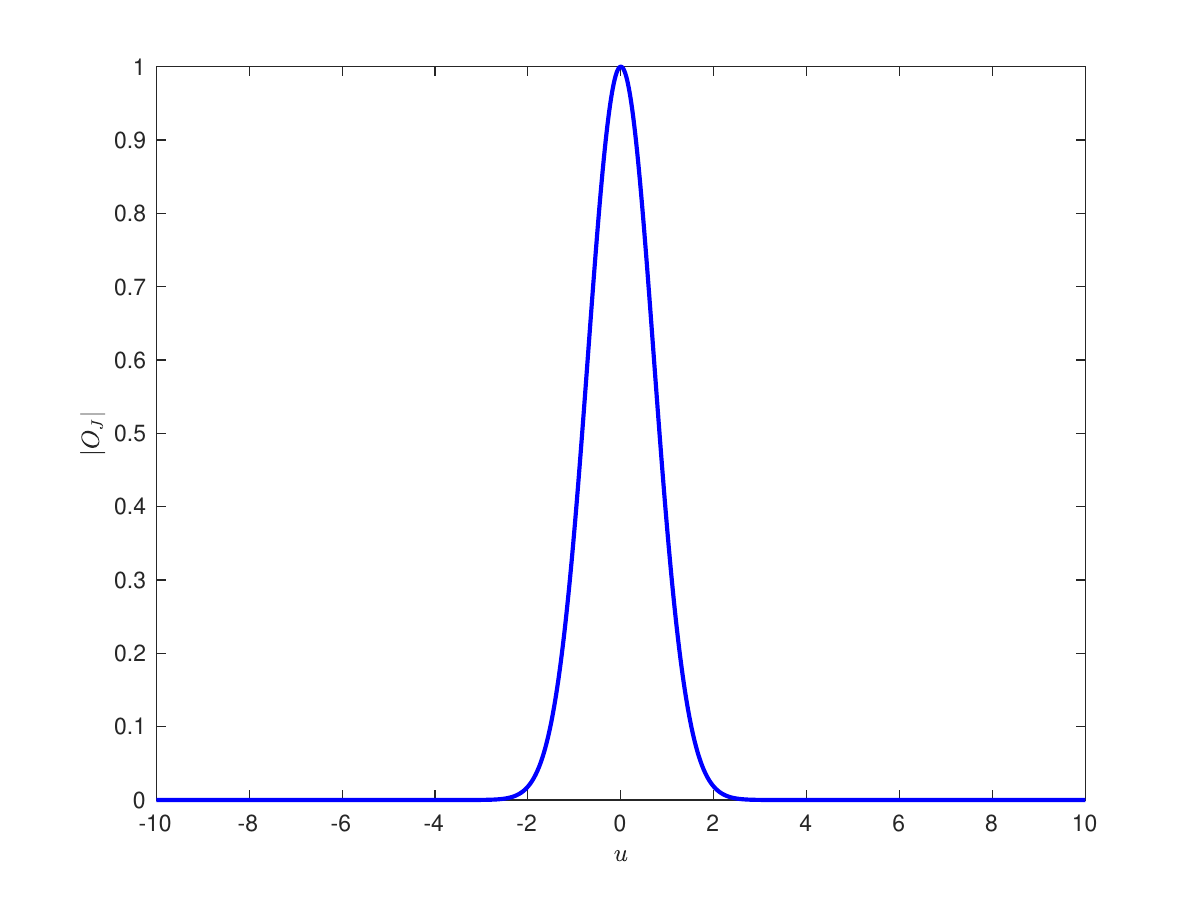}}
	\caption{Influence of the weight function $\omega$ and parameters $a$ and $b$ on the energy distribution of the given signal.}
	\label{fig4}
\end{figure}

Figure \ref{fig4} demonstrates that the concentration of the signal's energy is dependent on the choice of the weight function $\omega$ and the parameters $a$ and $b$ associated with the OLCT. In particular, a faster decay of the weight function $\omega$ leads to a more pronounced localization of the signal's energy. 

\section{Conclusion}
\label{sec5}
The uncertainty principle plays a pivotal role in signal processing and its applications. In this paper, the Plancherel-Parseval-Rayleigh identity in the offset linear canonical transform (OLCT) domain is derived. Based on this identity, the $2p$ order Heisenberg-Pauli-Weyl uncertainty principle of the OLCT is established. Subsequently, the Heisenberg-Weyl uncertainty principle in the OLCT domain is deduced. The sharpened Heisenberg-Weyl  uncertainty principle is then proposed and further validated through numerical simulations. The results of this work provide theoretical advancements and provide valuable insights for practical applications in time-frequency analysis within the OLCT domain. 

\section*{Acknowledgment}
This work was supported by grants from the National Natural Science Foundation of China [No. 62171041].

\section*{Appendix A.  Proof of Theorem \ref{thm3.2}}
To prove Theorem \ref{thm3.2}, we set
\begin{equation}
\begin{aligned}
\label{21}
M_p=&(\mu_{2p})_{\omega,\left|f(t) \right|^2}(\mu_{2p})_{\left| O_f^J(\xi) \right|^2} \\
=& \left( \int_{\mathbb{R}} \omega^2(t) (t-t_m)^{2p} \left|f(t) \right|^2 \mathrm{d}t \right) \cdot \left( \int_{\mathbb{R}} (\xi-\xi_m)^{2p} \left|O_f^J(\xi) \right|^2 \mathrm{d}\xi \right). 
\end{aligned}
\end{equation}
Since $g_{\beta}(t) = \mathrm{e}^{-\mathrm{j} \beta t} \mathrm{e}^{\mathrm{j} \frac{a}{2b} t^2}  f(t)$ and $\left| g_{\beta}(t) \right|^2 = \left|f(t) \right|^2$, then from \eqref{14} and \eqref{21}, we obtain
\begin{align}
\label{22}
M_p=&b^{2p} \left( \int_{\mathbb{R}} \omega^2(t) (t-t_m)^{2p} \left| g_{\beta}(t) \right|^2 \mathrm{d}t \right) \left( \int_{\mathbb{R}} \left| g_{\beta}^{(p)} (t) \right| ^2 \mathrm{d}t \right). 
\end{align}
Based on \eqref{22}, \eqref{91} and applying the Cauchy-Schwarz inequality, we get
\begin{equation}
\label{23}
M_p \ge b^{2p} \left( \int_{\mathbb{R}} \left| \omega_p(t) g_{\beta}(t) g_{\beta}^{(p)} (t) \right| \mathrm{d}t \right)^2.
\end{equation}
Applying the complex inequality to \eqref{23}, we have
\begin{equation}
\label{25}
M_p \ge \frac{b^{2p}}{4} \left( \int_{\mathbb{R}} \omega_p(t) \left( g_{\beta}(t) \overline{g_{\beta}^{(p)} (t)} + \overline{g_{\beta}(t)} g_{\beta}^{(p)} (t) \right) \mathrm{d}t \right)^2.
\end{equation}
Using the differential identity \eqref{6} on \eqref{25}, we obtain
\begin{equation}
\label{26}
M_p \ge \frac{b^{2p}}{4} \left( \int_{\mathbb{R}} \omega_p(t) \left( \sum_{q=0}^{\left[ \frac{p}{2} \right]} D_q \frac{\mathrm{d}^{p-2q}}{\mathrm{d}t^{p-2q}} \left| g_{\beta}^{(q)} (t) \right|^2 \right) \mathrm{d}t \right)^2.
\end{equation}
Applying the Lagrange type differential identity \eqref{9} to $\left| g_{\beta}^{(q)} (t) \right|^2$, we get
\begin{equation}
\begin{aligned}
\label{27}
\left| g_{\beta}^{(q)} (t) \right|^2= \sum_{n=0}^q B_{qn} \left|g^{(n)}(t) \right|^2 +2 \sum_{0 \leq i< z \leq q} C_{qiz} \mathrm{Re}((-1)^{q-\frac{i+z}{2}} g^{(i)}(t) g^{\overline{(z)}}(t)).
\end{aligned}
\end{equation}
From the integral identity \eqref{12} and \eqref{27}, we find
\begin{equation}
\label{28}
\int_{\mathbb{R}} \omega_p(t) \frac{\mathrm{d}^{p-2q}}{\mathrm{d}t^{p-2q}} \left|g^{(n)}(t) \right|^2 \mathrm{d}t = (-1)^{p-2q} \int_{\mathbb{R}}\omega_p^{p-2q}(t) \left|g^{(n)}(t) \right|^2 \mathrm{d}t = I_{qn},
\end{equation}
\begin{equation}
\begin{aligned}
\label{29}
&\int_{\mathbb{R}} \omega_p(t) \frac{\mathrm{d}^{p-2q}}{\mathrm{d}t^{p-2q}} \mathrm{Re}((-1)^{q-\frac{i+z}{2}} g^{(i)}(t) g^{\overline{(z)}}(t)) \mathrm{d}t \\
= &(-1)^{p-2q} \int_{\mathbb{R}}\omega_p^{p-2q}(t) \mathrm{Re}((-1)^{q-\frac{i+z}{2}} g^{(i)}(t) g^{\overline{(z)}}(t)) \mathrm{d}t \\
= &I_{qiz}.
\end{aligned}
\end{equation}
Taking \eqref{27}, \eqref{28} and \eqref{29} into \eqref{26}, we obtain
\begin{equation}
\begin{aligned}
\label{80}
M_p &\ge \frac{b^{2p}}{4} \left( \sum_{q=0}^{\left[ \frac{p}{2} \right]} D_q \left( \sum_{l=0}^q B_{qn} I_{qn} +2 \sum_{0 \leq i< z \leq q} C_{qiz} I_{qiz} \right) \right)^2 \\
&=\frac{b^{2p}}{4} \left( E_{p,f} \right)^2.
\end{aligned}
\end{equation}
This completes the proof of Theorem \ref{thm3.2}.


\end{document}